\documentclass[11pt]{amsart}
\RequirePackage[top=2.5cm,bottom=2.5cm,inner=2.2cm,outer=2.2cm,
headsep=1cm,footskip=1cm]{geometry}
\usepackage{amsthm}
\newtheorem{theorem}{Theorem}[section]
\newtheorem{proposition}[theorem]{Proposition}

\newtheorem{corollary}[theorem]{Corollary}
\newtheorem{lemma}[theorem]{Lemma}
\theoremstyle{definition}
\newtheorem{definition}[theorem]{Definition}
\newtheorem{remark}[theorem]{Remark}
\RequirePackage{amsmath,amssymb,amscd}

\newcommand{\N}{\ensuremath{\mathbf{N}}}

\newcommand{\R}{\ensuremath{\mathbf{R}}}
\newcommand{\C}{\ensuremath{\mathbf{C}}}
\newcommand{\set}[1]{\ensuremath{\{ #1 \}}}
\newcommand{\suchthat}{\ensuremath{\, \vert \,}}
\newcommand{\dummy}{\ensuremath{\text{\textbullet}}}

\newcommand{\cat}[1]{\ensuremath{\mathcal{#1}}}
\newcommand{\Ob}[1]{\ensuremath{\mathrm{Ob}(#1)}}
\newcommand{\Hom}[3][]{\ensuremath{\mathrm{Hom}_{#1} (#2, #3)}}

\newcommand{\sheaf}[1]{\ensuremath{\mathcal{#1}}}
\newcommand{\germ}[2]{\ensuremath{#1_{#2}}}
\newcommand{\stalk}[2]{\ensuremath{{#1}_{#2}}}

\newcommand{\restrict}[2]{\ensuremath{#1\vert_{#2}}}
\newcommand{\struct}[1]{\ensuremath{\mathcal{O}_{#1}}}
\newcommand{\ringed}[1]{\ensuremath{(#1,\struct{#1})}}
\newcommand{\rk}[2][]{\ensuremath{{\mathrm{rk}}_{#1}(#2)}}
\newcommand{\contstruct}[1]{\ensuremath{\mathcal{C}_{#1}}}

\newcommand{\contralg}[1]{\ensuremath{C(#1)}}
\newcommand{\cinfty}[1]{\ensuremath{\mathcal{C}^{\infty}(#1)}}
\newcommand{\Stein}[1]{\ensuremath{\mathfrak{#1}}}
\newcommand{\sstalk}[2]{\ensuremath{\struct{#1,#2}}}
\newcommand{\nsstalk}[3][]{\ensuremath{\struct{#2,#3}^{#1}}}
\newcommand{\stion}[2]{\ensuremath{\Gamma({#2},{#1})}}  
\newcommand{\mor}[3]{\ensuremath{{#1}:{#2}\rightarrow{#3}}}
\newcommand{\Cohgp}[3][]{\ensuremath{{\mathrm{H}}^{#1}({#2},{#3})}}

\newcommand{\Shom}[3][]{\ensuremath{{\mathcal{H}om}_{#1}({#2},{#3})}} 




\newcommand{\id}[1]{\ensuremath{\mathbf{1}_{#1}}}
\newcommand{\dsum}{\ensuremath{\oplus}}
\newcommand{\iso}{\ensuremath{\cong}}

\newcommand{\Ker}[1]{\ensuremath{\mathrm{Ker} (#1)}}

\newcommand{\modcat}[1]{\ensuremath{{#1}\text{-}\mathbf{mod}}}
\newcommand{\Lfb}[1]{\ensuremath{\mathbf{Lfb}({#1})}}

\newcommand{\Qcoh}[1]{\ensuremath{\mathbf{Qcoh}({#1})}}
\newcommand{\Coh}[1]{\ensuremath{\mathbf{Coh}(#1)}}
\newcommand{\Fgp}[1]{\ensuremath{\mathbf{Fgp}({#1})}} 
\newcommand{\Tensor}[3][]{\ensuremath{#2\otimes_{#1} #3}}

\newcommand{\Presheaf}[1]{\ensuremath{\mathcal{P}({#1})}} 

\newcommand{\Ext}[4][]{\ensuremath{{\mathrm{Ext}_{#2}^{#1}}(#3,#4)}} 



\newcommand{\X}[1]{\ensuremath{\mathrm{D} (#1)}}
\newcommand{\loc}[2]{\ensuremath{{#1}_{#2}}}


\newcommand{\adiffspace}[1]{\ensuremath{(\rspec{#1},\tilde{#1})}} 
\newcommand{\rspec}[1]{\ensuremath{\mathrm{Spec}_{\mathrm{r}} (#1)}}
\RequirePackage{url}

\newcommand{\biburl}[1]{\newline URL \url{#1}}


\renewcommand{\phi}{\ensuremath{\varphi}}
\renewcommand{\epsilon}{\ensuremath{\varepsilon}}



\RequirePackage{xr-hyper}
\RequirePackage{hyperref}
\hypersetup{bookmarks=true}%
\hypersetup{bookmarksopen=true}%

\begin{document}
\title{Note on the Serre-Swan Theorem}
\author{Archana~S.~Morye}
\address{Harish-Chandra Research Institute, Chhatnag Road, Jhusi, Allahabad
211 019, India}
\email{sarchana@mri.ernet.in}
\subjclass[2000]{Primary: 14F05; Secondary: 16D40, 18F20.}
\keywords{Serre-Swan Theorem, vector bundles, projective modules}  
\begin{abstract}
  \noindent We determine a class of ringed spaces, $\ringed{X}$ for which the
  category of locally free sheaves of bounded rank over $X$ is equivalent to
  the category of finitely generated projective
  $\stion{\struct{X}}{X}$-modules.  The well-known Serre-Swan theorems for
  affine schemes, differentiable manifolds, Stein spaces, etc., are then
  derived.
\end{abstract}
\maketitle
\section{Introduction}
\label{sec:Intro}
It is a well-known theorem due to Serre, that for an affine scheme
$\ringed{X}$, there is an equivalence of categories between locally free
$\struct{X}$-modules of finite rank, and finitely generated projective
$\stion{\struct{X}}{X}$-modules \cite[Section 50, Corollaire to Proposition 4,
p.~242]{Ser55}.  Later Swan proved the same equivalence when $X$ is a
paracompact topological space of finite covering dimension, and $\struct{X}$
is the sheaf of continuous real-valued functions on $X$ \cite[Theorem 2 and
p.~277]{Sw62}.  In this article we will generalize this result to a large
class of locally ringed spaces.

For any ringed space $\ringed{X}$, let $\modcat{\struct{X}}$ denote the
category of $\struct{X}$-modules, and $\Lfb{X}$ the full subcategory of
$\modcat{\struct{X}}$ consisting of locally free $\struct{X}$-modules of
bounded rank.  For any ring $A$, let $\modcat{A}$ denote the category of
$A$-modules, and $\Fgp{A}$ the full subcategory of $\modcat{A}$ consisting of
finitely generated projective $A$-modules.  We will say that the Serre-Swan
Theorem holds for a ringed space $\ringed{X}$ if the canonical functor
$\stion{\dummy}{X}:\Lfb{X} \rightarrow \Fgp{\stion{\struct{X}}{X}}$ is an
equivalence of categories.

Recall that an $\struct{X}$-module $\sheaf{F}$ is said to be {\it generated by
  global sections} if there is a family of sections $(s_i)_{i\in I}$ in
$\stion{\sheaf{F}}{X}$ such that for each $x\in X$, the images of $s_i$ in the
stalk $\stalk{\sheaf{F}}{x}$ generate that stalk as an $\sstalk{X}{x}$-module.
We will say that $\sheaf{F}$ is {\it finitely generated by global sections} if
a finite family of global sections $(s_i)_{i\in I}$ exists with the above
property.

\begin{definition} \label{def:1} Let $\ringed{X}$ be a locally ringed
  space.  Then, a subcategory $\cat{C}$ of $\modcat{\struct{X}}$ is
  called an {\it admissible subcategory} if it satisfies the following
  conditions:
  \begin{enumerate}\label{conditions}
  \item[{\bf C1.}] $\cat{C}$ is a full abelian subcategory of
    $\modcat{\struct{X}}$, and
    $\Shom[\struct{X}]{\sheaf{F}}{\sheaf{G}}$ belongs to $\cat{C}$ for
    every pair of sheaves $\sheaf{F}$ and $\sheaf{G}$ in $\cat{C}$,
    where $\Shom[\struct{X}]{\sheaf{F}}{\sheaf{G}}$ denotes the sheaf
    of $\struct{X}$-morphisms from $\sheaf{F}$ to $\sheaf{G}$.
  \item[{\bf C2.}] Every sheaf in $\cat{C}$ is acyclic, and generated by
    global sections.
  \item[{\bf C3.}] $\Lfb{X}$ is a subcategory of $\cat{C}$.
  \end{enumerate}
\end{definition}

In Section 2, we will prove that the Serre-Swan Theorem holds for a locally
ringed space $\ringed{X}$ if every locally free $\struct{X}$-module of bounded
rank is finitely generated by global sections, and if the category
$\modcat{\struct{X}}$ contains an admissible subcategory.  In Section 3, we
will show that certain wellknown classes of locally ringed spaces satisfy the
above conditions, and hence the Serre-Swan Theorem holds for them.

\section{The Serre-Swan Theorem}
\label{sec:serre-swan-theorem}

We will assume that all rings are commutative unless otherwise mentioned.  All
compact and paracompact topological spaces are assumed to be Hausdorff.  Let
$\ringed{X}$ be a ringed space, and let $A$ denote the ring
$\stion{\struct{X}}{X}$.  We will denote the $\stion{\struct{X}}{X}$-module of
homomorphism of $\struct{X}$-modules $\sheaf{F}$ and $\sheaf{G}$ by
$\Hom[\struct{X}]{\sheaf{F}}{\sheaf{G}}$.  If $\phi \in
\Hom[\struct{X}]{\sheaf{F}}{\sheaf{G}}$, and $U$ is an open subset of $X$,
then $\mor{\phi_{U}}{\sheaf{F}(U)}{\sheaf{G}(U)}$ denotes the homomorphism of
$\struct{X}(U)$-modules induced by $\phi$.

We have a canonical functor $\stion{\dummy}{X}:\modcat{\struct{X}} \rightarrow
\modcat{A}$.  Given an $A$-module $M$, we get a presheaf $\Presheaf{M}$ on $X$
by defining $\Presheaf{M}(U)=\Tensor[A]{M}{\struct{X}(U)}$ for every open set
$U$ of $X$.  We will denote by $\sheaf{S}(M)$ the sheaf associated to the
presheaf $\Presheaf{M}$.  Similarly, for a homomorphism $\mor{u}{M}{N}$ of
$A$-modules, we get a homomorphism
$\mor{\sheaf{S}(u)}{\sheaf{S}(M)}{\sheaf{S}(N)}$ of $\struct{X}$-modules.
Thus, we get a functor $\sheaf{S} :\modcat{A}\rightarrow \modcat{\struct{X}}$.
Since, for every $x$ in $X$, the functor $\Tensor[A]{\dummy}{\sstalk{X}{x}}$
is right exact, the functor $\sheaf{S}$ is right exact.

Note that the functor $\sheaf{S}$ is a left adjoint of $\stion{\dummy}{X}$.
Indeed, let $\sheaf{F}$ be an $\struct{X}$-module, and $M$ an $A$-module.
Consider a homomorphism $\mor{u}{M}{\stion{\sheaf{F}}{X}}$ of $A$-modules.
Let $\mor{v'}{\Presheaf{M}}{\sheaf{F}}$ be the morphism of presheaves such
that for every open subset $U$ of $X$,
$\mor{v'_U}{\Tensor[A]{M}{\struct{X}(U)}}{\sheaf{F}(U)}$ is given by
$v'_U(\Tensor[A]{m}{f})=f\cdot \restrict{u(m)}{U}$ for $m\in M$ and $f\in
\struct{X}(U)$.  Let $\mor{v_u}{\sheaf{S}(M)}{\sheaf{F}}$ be the morphism of
sheaves associated to $v'$. Define
\begin{equation}\label{eq:1}
  \mor{\lambda_{\sheaf{F},M}}{\Hom[A]{M}{\stion{\sheaf{F}}{X}}}{\Hom[\struct{X}]{\sheaf{S}(M)}{\sheaf{F}}}, \quad \lambda_{\sheaf{F},M}(u)=v_u.
\end{equation}
It is easy to see that the map $\lambda_{\sheaf{F},M}$ is a functorial
isomorphism.

\begin{theorem} \label{SS Theorem} Let $\ringed{X}$ be a locally
  ringed space, and let $A=\stion{\struct{X}}{X}$.  Assume that
  $\modcat{\struct{X}}$ contains an admissible subcategory $\cat{C}$,
  and that every sheaf in $\Lfb{X}$ is finitely generated by global
  sections.  Then, $\mor{\stion{\dummy}{X}}{\Lfb{X}}{\Fgp{A}}$ is an
  equivalence of categories, i.e., the Serre-Swan Theorem holds for
  $\ringed{X}$.
\end{theorem}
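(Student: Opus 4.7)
The plan is to exploit the adjunction $\sheaf{S} \dashv \stion{\dummy}{X}$ established in (1) and show that these two functors restrict to a mutually inverse pair of equivalences between $\Lfb{X}$ and $\Fgp{A}$. The central step is the following splitting lemma: any $\sheaf{F} \in \Lfb{X}$ is finitely generated by global sections, producing a surjection $\struct{X}^n \twoheadrightarrow \sheaf{F}$ with kernel $\sheaf{K}$. By C3 both $\sheaf{F}$ and $\struct{X}^n$ lie in $\cat{C}$, and since by C1 the category $\cat{C}$ is abelian, so does $\sheaf{K}$. Because $\sheaf{F}$ is locally free, the sequence $0 \to \sheaf{K} \to \struct{X}^n \to \sheaf{F} \to 0$ is locally split, hence applying $\Shom[\struct{X}]{\sheaf{F}}{\dummy}$ preserves exactness. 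All three resulting $\Shom$-sheaves lie in $\cat{C}$ by C1, hence are acyclic by C2, so $\stion{\dummy}{X}$ of the $\Shom$-sequence is still short exact. Lifting $\id{\sheaf{F}}$ then produces a splitting, and we obtain $\struct{X}^n \cong \sheaf{F} \oplus \sheaf{K}$.

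Two consequences follow immediately. First, $A^n \cong \stion{\sheaf{F}}{X} \oplus \stion{\sheaf{K}}{X}$, so $\stion{\sheaf{F}}{X} \in \Fgp{A}$, confirming that $\stion{\dummy}{X}$ restricts to a functor $\Lfb{X} \to \Fgp{A}$. Second, since the presheaf $\Presheaf{A^n}$ is already a sheaf, $\sheaf{S}(A^n) = \struct{X}^n$ and the counit $\epsilon_{\struct{X}^n}$ is an isomorphism. As $\sheaf{S}$ (a left adjoint) and $\stion{\dummy}{X}$ both preserve finite direct sums, naturality of $\epsilon$ identifies $\epsilon_{\struct{X}^n}$ with $\epsilon_{\sheaf{F}} \oplus \epsilon_{\sheaf{K}}$, forcing $\epsilon_{\sheaf{F}}$ to be an isomorphism for every $\sheaf{F} \in \Lfb{X}$. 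The adjunction isomorphism (1) then promotes this to full faithfulness of $\stion{\dummy}{X}$ on $\Lfb{X}$.

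For essential surjectivity, let $M \in \Fgp{A}$ and write $A^n = M \oplus N$ via an idempotent $e \in \End[A]{A^n}$. Then $\sheaf{S}(e)$ is an idempotent endomorphism of $\struct{X}^n$ whose image is $\sheaf{S}(M)$. At each $x \in X$, $\sheaf{S}(e)_x$ is an idempotent on the free module $\sstalk{X}{x}^n$ over the local ring $\sstalk{X}{x}$; standard diagonalization produces a basis of $\sheaf{S}(M)_x$, and since invertibility of the change-of-basis matrix is an open condition on $X$, this basis extends to an open neighborhood of $x$ on which $\sheaf{S}(M)$ is free. Thus $\sheaf{S}(M) \in \Lfb{X}$ with rank at most $n$. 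Since $\eta_{A^n}$ is the identity, naturality forces the decomposition $\eta_{A^n} = \eta_M \oplus \eta_N$ with $\eta_M$ an isomorphism, giving $M \cong \stion{\sheaf{S}(M)}{X}$ and completing the equivalence. I expect the main obstacle to be this last local-freeness step: it is the only place that genuinely exploits the locally ringed hypothesis on $\ringed{X}$, and it requires care to extend the stalkwise diagonalization of $\sheaf{S}(e)$ to an open neighborhood.
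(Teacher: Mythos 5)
Your proposal is correct, and for two of the three main steps it takes a genuinely different route from the paper. The splitting of $0\rightarrow\sheaf{K}\rightarrow\struct{X}^n\rightarrow\sheaf{F}\rightarrow 0$ is exactly the paper's Proposition \ref{prop:3}: where you apply $\Shom[\struct{X}]{\sheaf{F}}{\dummy}$ to the locally split sequence and use the long exact cohomology sequence together with acyclicity of $\Shom[\struct{X}]{\sheaf{F}}{\sheaf{K}}$, the paper invokes the identification $\Ext[1]{\struct{X}}{\sheaf{F}}{\sheaf{K}}\iso\Cohgp[1]{X}{\Shom[\struct{X}]{\sheaf{F}}{\sheaf{K}}}$ --- the same mechanism. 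After that you diverge. The paper proves full faithfulness by the explicit stalkwise argument of Proposition \ref{prop:1}, which crucially needs every object of $\cat{C}$ (in particular the kernel of any map $\struct{X}^n\rightarrow\sheaf{F}$) to be generated by global sections, and gets essential surjectivity from Proposition \ref{prop:2}, Corollary \ref{cor:1} and Lemmas \ref{lemma:2}, \ref{lemma:3}, via finite presentation and the EGA criterion for local freeness. You instead run the classical ring-theoretic Serre--Swan argument: the counit is an isomorphism on $\struct{X}^n$, hence by additivity on its direct summand $\sheaf{F}$, and full faithfulness follows from pure adjunction formalism ($u\mapsto u\circ\epsilon_{\sheaf{F}}$ under the bijection (\ref{eq:1})); essential surjectivity comes from realizing $\sheaf{S}(M)$ as the image of the idempotent $\sheaf{S}(e)$ on $\struct{X}^n$ and diagonalizing stalkwise. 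This buys you something real: your proof uses only the acyclicity half of condition {\bf C2} --- the ``generated by global sections'' requirement on objects of $\cat{C}$ is never invoked --- so it establishes the theorem under formally weaker hypotheses. What the paper's longer route buys is Remark \ref{remark:1}, the equivalence of all of $\cat{C}$ with its image in $\modcat{A}$, which is needed in Section 3 (e.g.\ for coherent sheaves on Stein spaces) and is not recovered by your argument. The step you flag as delicate does go through: represent your stalkwise basis by sections near $x$, note that the change-of-basis matrix is invertible on a neighborhood (its inverse at the stalk extends, and the finitely many entries of the product minus the identity have zero germ at $x$, hence vanish nearby), and likewise the conjugate of $\sheaf{S}(e)$ agrees with the standard diagonal idempotent on a possibly smaller neighborhood, on which $\sheaf{S}(M)$ is therefore free of rank at most $n$.
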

To prove the above theorem we will need some preliminary results.

\begin{proposition} \label{prop:1} Let $\ringed{X}$ be a ringed space,
  and let $\cat{C}$ be a full abelian subcategory of
  $\modcat{\struct{X}}$, such that $\struct{X}$ belongs to $\cat{C}$.
  Suppose that every sheaf in $\cat{C}$ is generated by global
  sections.  Then $\stion{\dummy}{X}:\cat{C} \rightarrow
  \modcat{\stion{\struct{X}}{X}}$ is fully faithful.
\end{proposition}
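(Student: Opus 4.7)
The plan is to verify that, for every $\sheaf{F}, \sheaf{G} \in \cat{C}$, the natural map
$\Hom[\struct{X}]{\sheaf{F}}{\sheaf{G}} \to \Hom[A]{\stion{\sheaf{F}}{X}}{\stion{\sheaf{G}}{X}}$
is both injective and surjective, where $A = \stion{\struct{X}}{X}$. Injectivity falls out quickly from the generating hypothesis: if $\phi_X = 0$, then for every $x \in X$ and every $s \in \stion{\sheaf{F}}{X}$ the germ $\phi_x(s_x) = (\phi_X(s))_x$ vanishes; since the germs $\set{s_x \suchthat s \in \stion{\sheaf{F}}{X}}$ generate $\stalk{\sheaf{F}}{x}$ as an $\sstalk{X}{x}$-module, $\phi_x = 0$ at every $x$, and hence $\phi = 0$.

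For surjectivity I would exploit the adjunction $\sheaf{S} \dashv \stion{\dummy}{X}$ recorded in \eqref{eq:1}. Given $u \colon \stion{\sheaf{F}}{X} \to \stion{\sheaf{G}}{X}$, set $\tilde{u} := \lambda_{\sheaf{G}, \stion{\sheaf{F}}{X}}(u)$ and let $\varepsilon := \lambda_{\sheaf{F}, \stion{\sheaf{F}}{X}}(\id{\stion{\sheaf{F}}{X}}) \colon \sheaf{S}(\stion{\sheaf{F}}{X}) \to \sheaf{F}$ be the counit of the adjunction at $\sheaf{F}$. The stalk identification $\sheaf{S}(\stion{\sheaf{F}}{X})_x \iso \Tensor[A]{\stion{\sheaf{F}}{X}}{\sstalk{X}{x}}$ gives the formulas $\varepsilon_x(s \otimes f) = f s_x$ and $\tilde{u}_x(s \otimes f) = f \cdot u(s)_x$. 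Generation of $\sheaf{F}$ by global sections amounts precisely to the surjectivity of each $\varepsilon_x$, so $\varepsilon$ is an epimorphism of sheaves. It therefore suffices to verify that $\tilde{u}$ vanishes on $\Ker{\varepsilon}$: granting this, $\tilde{u}$ factors uniquely as $\phi \circ \varepsilon$ for some $\phi \in \Hom[\struct{X}]{\sheaf{F}}{\sheaf{G}}$, and evaluating at the germs $s \otimes 1$ then shows $\phi_X = u$.

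The main obstacle is the vanishing of $\tilde{u}$ on $\Ker{\varepsilon}$, which I would prove stalkwise. Fix $x \in X$ and a relation $\sum_{i=1}^n f_i s_{i,x} = 0$ with $s_i \in \stion{\sheaf{F}}{X}$ and $f_i \in \sstalk{X}{x}$; the task is to show $\sum_i f_i u(s_i)_x = 0$. The key step is to introduce the morphism $\mor{\alpha}{\struct{X}^n}{\sheaf{F}}$ sending the $i$-th canonical generator to $s_i$. Here the two standing hypotheses on $\cat{C}$ combine: since $\struct{X} \in \cat{C}$ and $\cat{C}$ is a full abelian subcategory of $\modcat{\struct{X}}$, the finite biproduct $\struct{X}^n$ lies in $\cat{C}$, and therefore so does $\sheaf{K} := \Ker{\alpha}$; and by assumption $\sheaf{K}$ itself is generated by global sections. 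Consequently $(f_1, \ldots, f_n) \in \sheaf{K}_x$ can be written as an $\sstalk{X}{x}$-linear combination of germs of elements $k_j = (a_{j1}, \ldots, a_{jn}) \in \stion{\sheaf{K}}{X} \subseteq A^n$. Each $k_j$ encodes a genuine global relation $\sum_l a_{jl} s_l = 0$ in $\stion{\sheaf{F}}{X}$, which survives application of $u$; taking germs at $x$ and recombining with the $\sstalk{X}{x}$-coefficients then yields $\sum_l f_l u(s_l)_x = 0$, finishing the surjectivity argument.
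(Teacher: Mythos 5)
Your proof is correct, and its hard core coincides with the paper's: the injectivity argument is identical, and for surjectivity both you and the author form the morphism $\mor{\alpha}{\struct{X}^n}{\sheaf{F}}$ determined by the global sections $s_i$, observe that $\sheaf{K}=\Ker{\alpha}$ lies in $\cat{C}$ (hence is generated by global sections), and use this to lift a stalkwise relation $\sum_i f_i\cdot\germ{(s_i)}{x}=0$ to an $\sstalk{X}{x}$-combination of genuine global relations, which the $A$-linear map $u$ preserves. Where you genuinely diverge is in how the sheaf morphism is assembled from this stalk-level information. The paper defines candidate maps $\alpha^x$ on each stalk by hand and then checks explicitly that they glue to a morphism $\sheaf{F}\rightarrow\sheaf{G}$ inducing $u$ on global sections; you instead note that generation by global sections says the counit $\varepsilon:\sheaf{S}(\stion{\sheaf{F}}{X})\rightarrow\sheaf{F}$ of the adjunction (\ref{eq:1}) is an epimorphism, and that your kernel computation says precisely that $\tilde{u}$ kills $\Ker{\varepsilon}$, so the desired morphism falls out of the universal property of the cokernel in the abelian category $\modcat{\struct{X}}$, with $\phi_X=u$ read off from the germs $s\otimes 1$. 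This is close to the alternative route the paper itself sketches in Remark \ref{remark:2} via Proposition \ref{prop:2}, except that you need only that $\varepsilon$ is epi with $\tilde{u}$ vanishing on its kernel, not that $\varepsilon$ is an isomorphism; the payoff is that the gluing verification comes for free, at the modest cost of invoking the adjunction. Both arguments use the hypotheses on $\cat{C}$ in exactly the same two places (closure under finite biproducts and kernels, and generation by global sections), so your proof is a legitimate, slightly more categorical repackaging of the same idea.
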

\begin{proof} \label{proof:1} Let $A=\stion{\struct{X}}{X}$, and let
  $\sheaf{F}, \sheaf{G} \in \Ob{\cat{C}}$.  Then, we have to show that the
  homomorphism
  \begin{equation}\label{eq:2}
    \phi_{\sheaf{F},\sheaf{G}}:\Hom[\struct{X}]{\sheaf{F}}{\sheaf{G}}\rightarrow
    \Hom[A]{\stion{\sheaf{F}}{X}}{\stion{\sheaf{G}}{X}},\quad u \mapsto u_{X}
    \end{equation}
    is a bijection.  First we will prove that $\phi_{\sheaf{F},\sheaf{G}}$ is
    injective.  Let $u \in \Hom[\struct{X}]{\sheaf{F}}{\sheaf{G}}$ be such
    that $u_X=0$, we will then show that $u=0$. To show this it suffices to
    show that $u_x(w)=0$ for all $x\in X$, and $w \in \stalk{\sheaf{F}}{x}$.
    Since $\sheaf{F}$ is generated by global sections, $w=\sum_{i=1}^{n} a_i
    \cdot \germ{(s_i)}{x}$, where $a_i \in \sstalk{X}{x}$, and $s_i \in
    \stion{\sheaf{F}}{X}$, $i=1,\ldots,n$.  Thus,
    \begin{equation*} \label{eq:3} u_x(w) =u_x\left(\sum_{i=1}^{n} a_i \cdot
        \germ{(s_i)}{x}\right)=\sum_{i=1}^{n} a_i\cdot
      u_x\left(\germ{(s_i)}{x}\right) =\sum_{i=1}^{n} a_i \cdot
      \germ{\left(u_X(s_i)\right)}{x}=\sum_{i=1}^{n} a_i \cdot 0=0.
  \end{equation*}
  Hence we are through.  Now we will show that $\phi_{\sheaf{F},\sheaf{G}}$ is
  surjective.  Let $\mor{\alpha}{\stion{\sheaf{F}}{X}}{\stion{\sheaf{G}}{X}}$
  be a homomorphism of $A$-modules.  For every point $x$ in $X$, let us define
  $\mor{\alpha^x}{\stalk{\sheaf{F}}{x}}{\stalk{\sheaf{G}}{x}}$ as follows.
  Let $w=\sum_{i=1}^{n} a_i \cdot \germ{(s_i)}{x}$ in $\stalk{\sheaf{F}}{x}$
  defined as above.  Define $\alpha^{x}(w)=\sum_{i=1}^{n}a_i \cdot
  \germ{(\alpha(s_i))}{x}$.  To prove $\alpha^x$ is welldefined it is enough
  to check that if $\sum_{i=1}^{n}a_i\cdot \germ{(s_i)}{x}=0$, then
  $\sum_{i=1}^{n}a_i \cdot \germ{(\alpha(s_i))}{x}=0$.  Consider a
  homomorphism of $\struct{X}$-modules $\mor{\phi}{\struct{X}^n}{\sheaf{F}}$
  defined by the family $(s_i)_{i=1}^{n}$.  Then, $\sheaf{K}=\Ker{\phi} \in
  \Ob{\cat{C}}$, therefore $\sheaf{K}$ is also generated by global sections.
  As $(a_1,\ldots,a_n) \in \stalk{\sheaf{K}}{x}$, there exist $g_1,\ldots,g_p
  \in \stion{\sheaf{K}}{X}\subseteq \struct{X}^n(X) = A^n$, and
  $z_1,\ldots,z_p \in \sstalk{X}{x}$ such that $(a_1,\ldots,a_n)=\sum_{j=1}^p
  z_j \cdot \germ{(g_j)}{x}$.  Let $g_j=(g_{j1},\ldots,g_{jn})$, for $g_{ji}
  \in A$, $i=1,\ldots,n$, and $j=1,\ldots,p$.  Then $a_i=\sum_{j=1}^p z_j
  \cdot \germ{(g_{ji})}{x}$, for $i=1,\ldots,n$, and $\sum_{i=1}^n
  g_{ji}s_i=0$, for $j=1,\ldots,p$.  Consider,
  \begin{equation*} \label{eq:4} 
    \sum_{i=1}^n a_i \cdot \germ{\alpha(s_i)}{x}
    =\sum_{i,j=1}^{n,p} z_j \cdot \germ{(g_{ji})}{x} \germ{\alpha(s_i)}{x}
    =\sum_{j=1}^p z_j \cdot \germ{\left(\sum_{i=1}^ng_{ji}\alpha(s_i)\right)}{x}
    =\sum_{j=1}^p z_j \cdot\germ{\left(\alpha\left(\sum_{i=1}^n
          g_{ji}s_i\right)\right)}{x}=0.
  \end{equation*}
  Therefore $\alpha^x$ is a welldefined map for every $x$ in $X$.  Now, we
  will check that the $\alpha^x,(x\in X)$ give rise to a homomorphism of
  $\struct{X}$-module $\mor{u}{\sheaf{F}}{\sheaf{G}}$.  Let $U$ be an open
  neighborhood of $x$, and let $s\in \sheaf{F}(U)$.  We have as before, $s_x
  =\sum_{i=1}^{n} a_i \cdot \germ{(s_i)}{x}$ for some $s_i \in
  \stion{\sheaf{F}}{X}$, and for some $a_i \in \sstalk{X}{x}$, $i=1,\ldots,n$.
  Thus, there exist an open neighborhood $V$ of $x$ in $U$, and
  $f_i\in\struct{X}(V)$, such that $a_i=\germ{(f_i)}{x}$, $(i=1,\ldots,n$),
  and $\restrict{s}{V}=\sum_{i=1}^{n} f_i \cdot \restrict{s_i}{V}$.  Define
  $t= \sum_{i=1}^{n} f_i \cdot \restrict{\alpha(s_i)}{V}\in \sheaf{G}(V)$.
  Then, $\alpha^y(\germ{s}{y})=\alpha^y \left(\sum_{i=1}^n \germ{(f_i)}{y}
    \germ{(s_i)}{y}\right)=\sum_{i=1}^n
  \germ{(f_i)}{y}\germ{(\alpha(s_i))}{y}=\germ{t}{y}$, for all $y$ in $V$.
  This proves that, there exists a unique homomorphism
  $\mor{u}{\sheaf{F}}{\sheaf{G}}$ of $\struct{X}$-modules such that
  $\stalk{u}{x} = \alpha^x$ for all $x$ in $X$.  Also it is easy to check that
  $u_X=\alpha$.
\end{proof}
\begin{lemma} \label{lemma:1} Let $\ringed{X}$ be a ringed space, such that
  $X$ is a paracompact topological space, and $\struct{X}$ is a fine sheaf.
  Consider an $\struct{X}$-module $\sheaf{F}$.  Let $x$ be a point in $X$, $U$
  an open neighborhood of $x$, and $s'\in \sheaf{F}(U)$.  Then, there exist an
  open neighborhood $V$ of $x$ in $U$, and a global section $s$ of
  $\sheaf{F}$, such that $\restrict{s}{V}=\restrict{s'}{V}$.  In particular,
  the canonical homomorphism
  $\mor{\rho_x}{\stion{\sheaf{F}}{X}}{\stalk{\sheaf{F}}{x}}$ is surjective,
  and hence $\sheaf{F}$ is generated by global sections.
\end{lemma}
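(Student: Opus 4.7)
The intuition is the classical ``extension by multiplication by a bump function'' argument, adapted to the sheaf-theoretic setting. The hypothesis that $X$ is paracompact (hence normal) lets me shrink $U$ around $x$, and the hypothesis that $\struct{X}$ is fine lets me realise the shrinking as multiplication by a global section of $\struct{X}$.

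First, since $X$ is paracompact and Hausdorff, it is normal, so I can choose an open neighborhood $V$ of $x$ with $x\in V\subseteq \overline{V}\subseteq U$. The two-element family $\{U,\,X\setminus \overline{V}\}$ is then an open cover of $X$, and it is trivially locally finite. Appealing to the definition of a fine sheaf of rings, I obtain a partition of unity subordinate to this cover: global sections $\phi,\psi\in\stion{\struct{X}}{X}$ with $\phi+\psi=1$, $\supp{\phi}\subseteq U$, and $\supp{\psi}\subseteq X\setminus\overline{V}$.

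Next I use $\phi$ to extend $s'$. On $U$, consider the section $\phi\cdot s'\in\sheaf{F}(U)$; on the open set $W:=X\setminus\supp{\phi}$, the section $\phi$ restricts to $0$, hence so does $\phi\cdot s'$ on $U\cap W$. Since $\{U,W\}$ covers $X$ (because $\supp{\phi}\subseteq U$), the sections $\phi\cdot s'$ on $U$ and $0$ on $W$ agree on the overlap and therefore glue to a global section $s\in\stion{\sheaf{F}}{X}$. On $V$ we have $\psi|_V=0$ (as $V\cap \supp{\psi}=\emptyset$), so $\phi|_V=1$, giving $s|_V=\phi|_V\cdot s'|_V=s'|_V$, as required. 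The mild technical point to keep an eye on is the gluing in this step — one must verify that $\phi\cdot s'$ really does vanish on $U\cap W$, which is where the definition $\supp{\phi}=\overline{\{y:\phi_y\ne 0\}}$ is used, ensuring that the prospective global section is well defined.

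For the final assertion, given any $w\in\stalk{\sheaf{F}}{x}$, write $w=\germ{s'}{x}$ for some $s'\in\sheaf{F}(U)$ on an open neighborhood $U$ of $x$, and apply what has just been shown to produce $s\in\stion{\sheaf{F}}{X}$ with $\germ{s}{x}=\germ{s'}{x}=w$; this proves that $\rho_x$ is surjective. Since this holds for every $x\in X$, the family of all global sections of $\sheaf{F}$ witnesses that $\sheaf{F}$ is generated by global sections.
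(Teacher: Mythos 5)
Your proof is correct and is precisely the standard partition-of-unity ("bump function") argument that the paper itself alludes to without writing out ("the above lemma is standard when $\ringed{X}$ is a differential manifold, and the proof in the general case is similar"). All the key points — shrinking $U$ using normality of the paracompact Hausdorff space, extracting $\phi,\psi$ from fineness, and gluing $\phi\cdot s'$ with $0$ on the complement of $\supp{\phi}$ — are handled correctly.
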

The above lemma is standard when $\ringed{X}$ is a differential manifold, and
the proof in the general case is similar.  It follows from Proposition
\ref{prop:1} that for a ringed space $X$ satisfying the conditions of Lemma
\ref{lemma:1}, the functor
$\mor{\stion{\dummy}{X}}{\modcat{\struct{X}}}{\modcat{A}}$ is fully faithful.

\begin{remark} \label{remark:1} Let $\ringed{X}$ be a ringed space, and let
  $A$ denote the ring $\stion{\struct{X}}{X}$.  If $\cat{C}$ is a subcategory
  of $\modcat{\struct{X}}$, define ${\modcat{A}}_{\cat{C}}$ to be the full
  subcategory of $\modcat{A}$ consisting of $A$-modules $M$ such that $M\iso
  \stion{\sheaf{F}}{X}$ for some $\sheaf{F}$ in $\cat{C}$.  Let $\cat{C}$ be a
  subcategory of $\modcat{\struct{X}}$ as in Proposition \ref{prop:1}, then
  $\mor{\stion{\dummy}{X}}{\cat{C}}{{\modcat{A}}_{\cat{C}}}$ is an equivalence
  of categories.  Indeed, it follows from Proposition \ref{prop:1} that
  $\stion{\dummy}{X}$ is fully faithful, and by the definition of
  ${\modcat{A}}_{\cat{C}}$, it is essentially surjective.
\end{remark}

\begin{proposition} \label{prop:2}
  Let $\ringed{X}$ be a ringed space, and let $A$ denote the ring
  $\stion{\struct{X}}{X}$.  Suppose $\cat{C}$ is as in Proposition
  \ref{prop:1}.  Then, $\sheaf{F}$ is isomorphic to
  $\sheaf{S}(\stion{\sheaf{F}}{X})$ for every sheaf $\sheaf{F}$ in $\cat{C}$ .
\end{proposition}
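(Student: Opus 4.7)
The plan is to construct the counit morphism $\mor{\epsilon_{\sheaf{F}}}{\sheaf{S}(\stion{\sheaf{F}}{X})}{\sheaf{F}}$ from the adjunction of equation \eqref{eq:1} by setting $\epsilon_{\sheaf{F}} = \lambda_{\sheaf{F},\stion{\sheaf{F}}{X}}(\id{\stion{\sheaf{F}}{X}})$, and then show that $\epsilon_{\sheaf{F}}$ is an isomorphism by checking that it is an isomorphism on every stalk. Since sheafification preserves stalks and filtered tensor products commute with filtered colimits, the stalk $\stalk{\sheaf{S}(\stion{\sheaf{F}}{X})}{x}$ is naturally identified with $\Tensor[A]{\stion{\sheaf{F}}{X}}{\sstalk{X}{x}}$, and under this identification $\stalk{\epsilon_{\sheaf{F}}}{x}$ sends $s\otimes a$ to $a\cdot\germ{s}{x}$.

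Surjectivity of $\stalk{\epsilon_{\sheaf{F}}}{x}$ is immediate from the hypothesis that $\sheaf{F}$, being in $\cat{C}$, is generated by global sections: every element of $\stalk{\sheaf{F}}{x}$ is of the form $\sum_i a_i\cdot\germ{(s_i)}{x}$, which is the image of $\sum_i s_i\otimes a_i$.

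For injectivity, I will essentially replay the welldefinedness argument from the proof of Proposition \ref{prop:1}. Suppose $\sum_{i=1}^{n} a_i\cdot\germ{(s_i)}{x}=0$ in $\stalk{\sheaf{F}}{x}$ with $s_i\in\stion{\sheaf{F}}{X}$ and $a_i\in\sstalk{X}{x}$. The family $(s_i)$ defines a morphism $\mor{\phi}{\struct{X}^n}{\sheaf{F}}$; since $\struct{X}\in\cat{C}$ and $\cat{C}$ is a full abelian subcategory, both $\struct{X}^n$ and $\sheaf{K}=\Ker{\phi}$ lie in $\cat{C}$, and therefore $\sheaf{K}$ is generated by global sections. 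Writing $(a_1,\ldots,a_n)\in\stalk{\sheaf{K}}{x}$ as $\sum_{j=1}^{p} z_j\cdot\germ{(g_j)}{x}$ with $g_j=(g_{j1},\ldots,g_{jn})\in\stion{\sheaf{K}}{X}\subseteq A^n$ and $z_j\in\sstalk{X}{x}$, one has $\sum_{i=1}^n g_{ji}s_i=0$ in $\stion{\sheaf{F}}{X}$ for every $j$, and hence in $\Tensor[A]{\stion{\sheaf{F}}{X}}{\sstalk{X}{x}}$,
\begin{equation*}
\sum_{i=1}^{n} s_i\otimes a_i
=\sum_{i,j} s_i\otimes z_j\germ{(g_{ji})}{x}
=\sum_{j=1}^{p}\left(\sum_{i=1}^{n} g_{ji}s_i\right)\otimes z_j = 0.
\end{equation*}
This shows $\stalk{\epsilon_{\sheaf{F}}}{x}$ is injective, completing the proof.

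The main obstacle is the injectivity step, which is exactly where the hypothesis that $\cat{C}$ is a full \emph{abelian} subcategory enters: without it, the kernel sheaf $\sheaf{K}$ would not be in $\cat{C}$ and would not be known to be globally generated, so one could not exhibit the finitely many relations $\sum_i g_{ji}s_i=0$ needed to kill $\sum_i s_i\otimes a_i$ in the tensor product over $A$.
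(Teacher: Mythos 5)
Your proposal is correct and follows essentially the same route as the paper: both construct the counit $\lambda_{\sheaf{F},\stion{\sheaf{F}}{X}}(\id{\stion{\sheaf{F}}{X}})$, identify its stalk at $x$ with the map $\Tensor[A]{\stion{\sheaf{F}}{X}}{\sstalk{X}{x}}\rightarrow\stalk{\sheaf{F}}{x}$, $s\otimes a\mapsto a\cdot\germ{s}{x}$, get surjectivity from global generation, and prove injectivity by passing to the kernel $\sheaf{K}$ of $\mor{\phi}{\struct{X}^n}{\sheaf{F}}$, which lies in $\cat{C}$ and is therefore globally generated, yielding the relations $\sum_i g_{ji}s_i=0$ that kill $\sum_i s_i\otimes a_i$. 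Your closing remark correctly identifies the abelian-subcategory hypothesis as the crux of the injectivity step, exactly as in the paper.
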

\begin{proof} \label{proof:2} Let $M=\stion{\sheaf{F}}{X}$.  Recall that
  $\sheaf{P}(M)$ denotes the presheaf tensor product of $M$ and $\struct{X}$
  over $A$.  Let $\mor{u'}{\sheaf{P}(M)}{\sheaf{F}}$ be the morphism of
  presheaves such that for every open subset $U$ of $X$,
  $\mor{{u'}_U}{\Tensor[A]{M}{\struct{X}(U)}}{\sheaf{F}(U)}$ is given by
  ${u'}_U(\Tensor[A]{m}{f})=f\cdot\restrict{m}{U}$ for $m\in M$ and
  $f\in\struct{X}(U)$.  Let $\mor{u}{\sheaf{S}(M)}{\sheaf{F}}$ be the morphism
  of sheaves associated to $u'$.  (Note that,
  $u=\lambda_{\sheaf{F},M}(\id{M})$, where $\lambda_{\sheaf{F},M}$ is
  (\ref{eq:1}), i.e., $u$ is the counit morphism of $\sheaf{F}$ with respect
  to the adjunction $\lambda$.)  Thus,
  $\mor{u_x}{\Tensor[A]{M}{\sstalk{X}{x}}}{\stalk{\sheaf{F}}{x}}$ is such that
  $\stalk{u}{x}(\sum_{i=1}^n \Tensor[A]{s_i}{a_i})=\sum_{i=1}^n a_i \cdot
  \germ{(s_i)}{x}$, for $x\in X$, $a_i\in \sstalk{X}{x}$, and $s_i\in M$,
  $i=1,\ldots,n$.  Since $u$ is a morphism of sheaves to prove that $u$ is an
  isomorphism, it is enough to prove that $\stalk{u}{x}$ is an isomorphism for
  every $x$ in $X$.  The sheaf $\sheaf{F}$ is generated by global sections,
  therefore any germ $w$ belongs to $\stalk{\sheaf{F}}{x}$ can be written as
  $w=\sum_{i=1}^n a_i \cdot \germ{(s_i)}{x}$, for some $a_i\in\sstalk{X}{x}$,
  and $s_i \in M$, $i=1,\ldots,n$.  Define $\alpha=\sum_{i=1}^n
  \Tensor[A]{s_i}{a_i}$.  Then $
  \stalk{u}{x}(\alpha)=\stalk{u}{x}(\sum_{i=1}^n \Tensor[A]{s_i}
  {a_i})=\sum_{i=1}^n a_i \cdot \germ{(s_i)}{x}=w$.  Therefore, $\stalk{u}{x}$
  is surjective.  Now we will show that $\stalk{u}{x}$ is injective.  Let
  $w=\sum_{i=1}^n \Tensor[A]{s_i}{a_i}$, where $s_i\in M$, and $a_i\in
  \sstalk{X}{x}$, $i=1,\ldots, n$, and suppose that
  $\stalk{u}{x}(w)=\sum_{i=1}^n a_i \cdot \germ{(s_i)}{x}=0$.  Consider the
  morphism $\mor{\phi}{\struct{X}^n}{\sheaf{F}}$ of $\struct{X}$-modules
  defined by the family $(s_i)_{i=1}^{n}$.  Then, $\sheaf{K}=\Ker{\phi}$
  belongs to $\cat{C}$, and $(a_1,\ldots,a_n)$ belongs to
  $\stalk{\sheaf{K}}{x}$.  Therefore, there exist $g_1,\ldots,g_p \in
  \stion{\sheaf{K}}{X} \subseteq \struct{X}^n(X) = A^n$, and $z_1,\ldots,z_p
  \in \sstalk{X}{x}$ such that $(a_1,\ldots,a_n)=\sum_{j=1}^p z_j \cdot
  \germ{(g_j)}{x}$.  Let $g_j=(g_{j1},\ldots,g_{jn})$, where $g_{ji} \in A$,
  $i=1,\ldots,n$, and $j=1,\ldots,p$.  Then, $a_i=\sum_{j=1}^p z_j \cdot
  \germ{(g_{ji})}{x}$ for $i=1,\ldots,n$, and $\sum_{i=1}^n g_{ji}s_i=0$, for
  $j=1,\ldots,p$.  Thus,
  \begin{equation*}\label{eq:5}
    w=\sum_{i=1}^n \Tensor[A]{s_i}{a_i} =\sum_{i,j=1}^{n,p}
    \Tensor[A]{s_i}{z_j \cdot \germ{(g_{ji})}{x}} =\sum_{i,j=1}^{n,p}
    \Tensor[A]{\left(g_{ji}s_i\right)}{z_j} =\sum_{j=1}^p
    \Tensor[A]{\left(\sum_{i=1}^n g_{ji}s_i \right)}{z_j} =0.
  \end{equation*}  
  Therefore $\stalk{u}{x}$ is injective.  
\end{proof}
\begin{remark}\label{remark:2}
  One can also prove Proposition \ref{prop:1} using Proposition \ref{prop:2}.
  Indeed we can define the inverse of a morphism $\phi_{\sheaf{F},\sheaf{G}}$,
  (\ref{eq:2}).  If $v$ is the inverse of $u$ in Proposition \ref{prop:2},
  then for all $x$ in $X$,
  $\mor{\stalk{v}{x}}{\stalk{\sheaf{F}}{x}}{\Tensor[A]{\stion{\sheaf{F}}{X}}{\sstalk{X}{x}}},
  w\mapsto \sum_{i=1}^n \Tensor[A]{s_i}{a_i}$, where $w=\sum_{i=1}^n a_i \cdot
  \germ{(s_i)}{x}$, $a_i\in \sstalk{X}{x}$, $s_i\in \stion{\sheaf{F}}{X}$,
  $i=1,\ldots,n$.  Let $u_{\sheaf{G}}$ be the counit morphism of $\sheaf{G}$
  with respect to the adjunction $\lambda$, (\ref{eq:1}).  Then, the inverse
  of $\phi_{\sheaf{F},\sheaf{G}}$ is given by
  $\mor{\psi}{\Hom[A]{\stion{\sheaf{F}}{X}}{\stion{\sheaf{G}}{X}}}{\Hom[\struct{X}]{\sheaf{F}}{\sheaf{G}}}$,
  $\alpha \mapsto u_{\sheaf{G}} \circ \sheaf{S}(\alpha) \circ v $.
\end{remark}
\begin{corollary}\label{cor:1}
  Let $\ringed{X}$ be a locally ringed space, and let $A$ denote its ring of
  global sections.  Assume that $\modcat{\struct{X}}$ contains an admissible
  subcategory $\cat{C}$.  Then, every finitely generated projective module $P$
  is isomorphic to $\stion{\sheaf{S}(P)}{X}$.
\end{corollary}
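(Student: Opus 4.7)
The plan is to reduce to the case of a finite free $A$-module (using that a finitely generated projective $A$-module is a direct summand of a free one) and then exploit the additivity of both $\sheaf{S}$ (as a left adjoint) and $\stion{\dummy}{X}$.

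First I would record two basic observations about the functor $\sheaf{S}$. Since the presheaf $\Presheaf{A}(U) = \Tensor[A]{A}{\struct{X}(U)} = \struct{X}(U)$ is already a sheaf, $\sheaf{S}(A) = \struct{X}$, and therefore $\sheaf{S}(A^n) = \struct{X}^n$ for every $n \ge 1$. Moreover, an easy inspection of the adjunction isomorphism $\lambda$ in (\ref{eq:1}) shows that the corresponding unit $\eta_A : A \to \stion{\sheaf{S}(A)}{X} = A$ is the identity, and hence so is $\eta_{A^n}$ for every $n$ under the identification $\stion{\sheaf{S}(A^n)}{X} = A^n$.

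For a general $P \in \Fgp{A}$, choose an $A$-module $Q$ and an integer $n$ with $A^n = P \oplus Q$. Since $\sheaf{S}$ is a left adjoint it preserves direct sums, and $\stion{\dummy}{X}$ is additive, so $A^n = \stion{\sheaf{S}(A^n)}{X} = \stion{\sheaf{S}(P)}{X} \oplus \stion{\sheaf{S}(Q)}{X}$. By naturality of the unit applied to the inclusions and projections of this splitting, $\eta_{A^n} = \eta_P \oplus \eta_Q$. Since $\eta_{A^n}$ is the identity of $A^n$, both $\eta_P : P \to \stion{\sheaf{S}(P)}{X}$ and $\eta_Q : Q \to \stion{\sheaf{S}(Q)}{X}$ are isomorphisms, which yields the claim.

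No genuine obstacle arises; the argument is a formal consequence of additivity and naturality in the adjunction, together with the trivial computation that $\sheaf{S}$ sends the free rank-one module to $\struct{X}$. The admissible subcategory hypothesis is not logically used in this corollary and only enters Theorem \ref{SS Theorem}, where the additional properties of $\cat{C}$ are needed to show that $\sheaf{S}(P)$ actually lies in $\Lfb{X}$.
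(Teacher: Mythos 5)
Your argument is correct, but it takes a genuinely different route from the paper's. The paper never uses projectivity beyond finite presentation: it chooses a presentation $A^p \stackrel{\alpha}{\rightarrow} A^q \rightarrow P \rightarrow 0$, applies the right exact functor $\sheaf{S}$ together with Proposition \ref{prop:2} to obtain $\struct{X}^p \rightarrow \struct{X}^q \rightarrow \sheaf{S}(P) \rightarrow 0$, invokes \textbf{C1} and \textbf{C3} to place $\sheaf{S}(P)$ in $\cat{C}$, uses the acyclicity from \textbf{C2} to see that $\stion{\dummy}{X}$ carries this sequence to an exact sequence $A^p \stackrel{\alpha}{\rightarrow} A^q \rightarrow \stion{\sheaf{S}(P)}{X} \rightarrow 0$, and finally identifies the two cokernels via $\psi(\alpha)_X=\alpha$. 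You instead exploit projectivity head-on: $P$ is a retract of $A^n$, the unit of the adjunction is the identity on $A^n$ (your computations that $\sheaf{S}(A^n)\iso\struct{X}^n$ and $\eta_{A^n}=\id{A^n}$ are both correct), and the class of modules on which the unit is an isomorphism is closed under retracts --- your block-diagonal argument works, or one can note directly that for $p\circ i=\id{P}$ the map $p\circ\eta_{A^n}^{-1}\circ\stion{\sheaf{S}(i)}{X}$ is a two-sided inverse of $\eta_P$ by naturality. You are also right that this makes the admissible-subcategory hypothesis superfluous for this corollary as stated: your proof works over an arbitrary ringed space. What the paper's longer route buys is a strictly stronger intermediate fact, namely that $M\iso\stion{\sheaf{S}(M)}{X}$ for \emph{every} finitely presented $A$-module $M$ (and that $\sheaf{S}(P)$ lies in $\cat{C}$), which fits the machinery of the rest of Section 2; what your route buys is brevity, generality, and a purely formal adjunction argument. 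Both are valid proofs of the statement.
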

\begin{proof}\label{proof:3} 
  Every finitely generated projective module is finitely presented, therefore
  we get an exact sequence, $A^p \stackrel{\alpha}{\rightarrow } A^q
  \rightarrow P \rightarrow 0$, for some $p,q \in \N$.  Applying the functor
  $\sheaf{S}$, and by Proposition \ref{prop:2} we get an exact sequence
  $\struct{X}^p \stackrel{\psi(\alpha)}{\rightarrow }\struct{X}^q \rightarrow
  \sheaf{S}(P) \rightarrow 0$, where $\psi$ is as in Remark \ref{remark:2}.
  Since $\cat{C}$ is an admissible subcategory by {\bf C3} sheaf
  $\struct{X}^m$ is in $\cat{C}$ for every integer $m$, and so by {\bf C1} the
  sheaf $\sheaf{S}(P)\in\Ob{\cat{C}}$.  Also $\stion{\dummy}{X}$ is an exact
  functor restricted to the category of acyclic $\struct{X}$-modules, it
  follows that $A^p \stackrel{\psi(\alpha)_{X}}{\longrightarrow } A^q
  \rightarrow \stion{\sheaf{S}(P)}{X} \rightarrow 0$ is an exact sequence.  By
  Remark \ref{remark:2}, $\psi(\alpha)_{X}=\alpha$.  Therefore
  $\stion{\sheaf{S}(P)}{X} \iso P$, being cokernels of the same map.
\end{proof}

\begin{proposition} \label{prop:3} Let $\ringed{X}$ be a locally ringed space,
  and let $A$ denote the ring $\stion{\struct{X}}{X}$.  Assume that
  $\modcat{\struct{X}}$ contains an admissible subcategory $\cat{C}$.  If a
  locally free sheaf of bounded rank $\sheaf{F}$ is finitely generated by
  global sections, then $\stion{\sheaf{F}}{X}$ is a finitely generated
  projective $A$-module.
\end{proposition}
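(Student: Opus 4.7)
The plan is to establish two facts about $M := \stion{\sheaf{F}}{X}$: finite generation, and splittability of the natural surjection from a finite free $A$-module. Both are obtained by applying the admissibility of $\cat{C}$ to two different short exact sequences built from $\sheaf{F}$.

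First I would pick a finite family $s_{1},\ldots,s_{n} \in M$ whose germs generate every stalk of $\sheaf{F}$, as provided by the hypothesis. These assemble into a surjective morphism $\phi:\struct{X}^{n} \to \sheaf{F}$ of $\struct{X}$-modules. Both $\struct{X}^{n}$ and $\sheaf{F}$ lie in $\Lfb{X} \subseteq \cat{C}$ by {\bf C3}, and since $\cat{C}$ is abelian ({\bf C1}) the kernel $\sheaf{K} := \Ker{\phi}$ lies in $\cat{C}$ as well. By {\bf C2} the sheaf $\sheaf{K}$ is acyclic, so applying the global sections functor to $0 \to \sheaf{K} \to \struct{X}^{n} \to \sheaf{F} \to 0$ produces a short exact sequence of $A$-modules; in particular the map $A^{n}\to M$ is surjective, showing that $M$ is finitely generated.

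To establish projectivity I would produce a splitting at the sheaf level and then push it through $\stion{\dummy}{X}$. Applying $\Shom[\struct{X}]{\sheaf{F}}{-}$ to the same short exact sequence yields
\[
0 \to \Shom[\struct{X}]{\sheaf{F}}{\sheaf{K}} \to \Shom[\struct{X}]{\sheaf{F}}{\struct{X}^{n}} \to \Send[\struct{X}]{\sheaf{F}} \to 0,
\]
because $\sheaf{F}$ is locally free of finite rank, so $\Shom[\struct{X}]{\sheaf{F}}{-}$ is exact (checked on stalks, where each $\sheaf{F}_{x}$ is a finitely generated free $\sstalk{X}{x}$-module). By {\bf C1} all three sheaves belong to $\cat{C}$, and by {\bf C2} the leftmost one is acyclic, so global sections preserves exactness of this sequence. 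Lifting $\id{\sheaf{F}}\in \End[\struct{X}]{\sheaf{F}}$ along the resulting surjection produces $\psi:\sheaf{F} \to \struct{X}^{n}$ with $\phi \circ \psi = \id{\sheaf{F}}$, and applying $\stion{\dummy}{X}$ gives a right inverse $M \to A^{n}$ to the surjection $A^{n} \to M$. Thus $M$ is a direct summand of $A^{n}$, hence a finitely generated projective $A$-module.

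The main point to verify carefully is the exactness of $\Shom[\struct{X}]{\sheaf{F}}{-}$ on the chosen short exact sequence, which rests on $\sheaf{F}$ being locally free of finite rank and is a standard stalkwise computation. Beyond that the argument is essentially bookkeeping with the three admissibility axioms: {\bf C3} deposits $\struct{X}^{n}$ and $\sheaf{F}$ into $\cat{C}$, {\bf C1} then provides closure under kernels and internal Hom so that both $\sheaf{K}$ and $\Shom[\struct{X}]{\sheaf{F}}{\sheaf{K}}$ land in $\cat{C}$, and {\bf C2} supplies the two acyclicities needed to pass the two key exact sequences through the global sections functor.
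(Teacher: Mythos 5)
Your proof is correct and follows essentially the same route as the paper: both split the sequence $0\to\sheaf{K}\to\struct{X}^n\to\sheaf{F}\to 0$ by showing that the obstruction in $\Cohgp[1]{X}{\Shom[\struct{X}]{\sheaf{F}}{\sheaf{K}}}$ vanishes, because admissibility places $\Shom[\struct{X}]{\sheaf{F}}{\sheaf{K}}$ in $\cat{C}$ and hence makes it acyclic. The only difference is cosmetic: the paper invokes Grothendieck's isomorphism $\Ext[1]{\struct{X}}{\sheaf{F}}{\sheaf{K}}\iso\Cohgp[1]{X}{\Shom[\struct{X}]{\sheaf{F}}{\sheaf{K}}}$ directly, whereas you unwind it by hand, applying the exact functor $\Shom[\struct{X}]{\sheaf{F}}{\dummy}$ and lifting $\id{\sheaf{F}}$ through global sections.
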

\begin{proof} \label{proof:4} 
  By the hypothesis, there exists a surjective morphism
  $\mor{u}{\struct{X}^n}{\sheaf{F}}$ for some $n\in\N$.  Consider the exact
  sequence of $\struct{X}$-modules
  \begin{equation} \label{exactseq:1} 
    0\rightarrow \sheaf{K} \rightarrow
    \struct{X}^n \stackrel{u}{\rightarrow}\sheaf{F} \rightarrow 0
  \end{equation}
  where $\sheaf{K}=\ker{u}$.  Since $\sheaf{F}$ is locally free of bounded
  rank, by \cite[Corollaire to Proposition 4.2.3, p.~189]{Gro57}
  $\Ext[1]{\struct{X}}{\sheaf{F}}{\sheaf{K}} \iso
  \Cohgp[1]{X}{\Shom[\struct{X}]{\sheaf{F}}{\sheaf{K}}}$.  As $\cat{C}$ is an
  admissible subcategory $\Shom[\struct{X}]{\sheaf{F}}{\sheaf{K}} \in
  \Ob{\cat{C}}$.  Hence $\Shom[\struct{X}]{\sheaf{F}}{\sheaf{K}}$ is acyclic.
  Therefore $\Ext[1]{\struct{X}}{\sheaf{F}}{\sheaf{K}}=0$, and consequently
  the exact sequence (\ref{exactseq:1}) splits.  This implies that
  $\struct{X}^n\iso\sheaf{F}\dsum \sheaf{K}$.  Thus $A^n \iso
  \stion{\struct{X}^n}{X} \iso \stion{\sheaf{F}}{X} \dsum
  \stion{\sheaf{K}}{X}$.  This proves that $\stion{\sheaf{F}}{X}$ is a
  finitely generated projective $A$-module.
\end{proof}
\begin{remark}\label{remark:3}
  Let $\ringed{X}$ is a ringed space.  Recall that an $\struct{X}$-module
  $\sheaf{F}$ is said to be of {\it finite type} if for every point $x\in X$,
  there exists an open neighborhood $U$ of $x$ such that
  $\restrict{\sheaf{F}}{U}$ is generated by a finite family of sections of
  $\sheaf{F}$ on $U$ \cite[Chap. 0, 5.2.1, p.~45]{Gro60}.  An
  $\struct{X}$-module $\sheaf{F}$ is said to be is of {\it finite
    presentation} if for every point $x$ in $X$, there exist an open
  neighborhood $U$ of $x$, and an exact sequence of
  $\restrict{\struct{X}}{U}$-modules
  $\restrict{\struct{X}^p}{U}\stackrel{u}{\rightarrow}
  \restrict{\struct{X}^q}{U}\stackrel{v}{\rightarrow} \restrict{\sheaf{F}}{U}
  \rightarrow 0$ \cite[Chap. 0, 5.2.5, p.~46]{Gro60}.  Let $\sheaf{F}$ and
  $\sheaf{G}$ be $\struct{X}$-modules of finite presentation.  Let $x \in X$,
  and suppose that the $\sstalk{X}{x}$-modules $\stalk{\sheaf{F}}{x}$ and
  $\stalk{\sheaf{G}}{x}$ are isomorphic.  Then, there exists an open
  neighborhood $U$ of $x$ such that the $\restrict{\struct{X}}{U}$-modules
  $\restrict{\sheaf{F}}{U}$ and $\restrict{\sheaf{G}}{U}$ are isomorphic
  \cite[Chap. 0, 5.2.7, pp.~46-47]{Gro60}.
\end{remark}

\begin{lemma} \label{lemma:2} Let $\ringed{X}$ be a ringed space, and let
  $\sheaf{F}$ be an $\struct{X}$-module of finite presentation.  Let $x\in X$,
  and suppose that $\stalk{\sheaf{F}}{x}$ is a free $\sstalk{X}{x}$-module.
  Then there exist an open neighborhood $U$ of $x$, and $n\in \N$, such that
  $\restrict{\sheaf{F}}{U}$ is isomorphic to $\restrict{\struct{X}^n}{U}$.
\end{lemma}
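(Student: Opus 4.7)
The plan is to reduce the statement directly to the fact recalled at the end of Remark \ref{remark:3}: two $\struct{X}$-modules of finite presentation whose stalks at $x$ are isomorphic must be isomorphic on some open neighborhood of $x$.

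First I would use that $\sheaf{F}$, being of finite presentation, is in particular of finite type, so the stalk $\stalk{\sheaf{F}}{x}$ is a finitely generated $\sstalk{X}{x}$-module. Combined with the hypothesis that $\stalk{\sheaf{F}}{x}$ is free, this forces $\stalk{\sheaf{F}}{x}$ to be free of some \emph{finite} rank $n \in \N$; this is what gets the integer $n$ in the conclusion. This small finiteness observation is really the only content beyond the cited result, so I expect no genuine obstacle.

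Next I would observe that the free sheaf $\struct{X}^n$ is trivially of finite presentation (the exact sequence $\struct{X}^0 \rightarrow \struct{X}^n \rightarrow \struct{X}^n \rightarrow 0$ works globally, hence locally at $x$), and its stalk at $x$ is $\sstalk{X}{x}^n$, which by the previous step is isomorphic to $\stalk{\sheaf{F}}{x}$ as an $\sstalk{X}{x}$-module.

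Finally, applying the result from \cite[Chap. 0, 5.2.7]{Gro60} recalled in Remark \ref{remark:3} to the two finitely presented sheaves $\sheaf{F}$ and $\struct{X}^n$ with isomorphic stalks at $x$, there is an open neighborhood $U$ of $x$ such that $\restrict{\sheaf{F}}{U} \iso \restrict{\struct{X}^n}{U}$, which is exactly the desired conclusion.
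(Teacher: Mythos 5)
Your proposal is correct and follows essentially the same route as the paper: identify the finite rank $n$ of the free stalk, note that $\struct{X}^n$ is of finite presentation, and invoke the local-isomorphism criterion for finitely presented sheaves with isomorphic stalks recalled in Remark \ref{remark:3}. Your explicit justification that the rank is finite (via finite type of $\sheaf{F}$) is a small but welcome addition that the paper leaves implicit.
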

\begin{proof}\label{proof:5}
  Since $\stalk{\sheaf{F}}{x}$ is a free $\sstalk{X}{x}$-module, there exists
  an integer $n\in \N$ such that $\sstalk{X}{x}$-modules $\nsstalk[n]{X}{x}$
  and $\stalk{\sheaf{F}}{x}$ are isomorphic.  All locally free sheaves of
  finite type are of finite presentation, hence $\struct{X}^n$ is of finite
  presentation.  From Remark \ref{remark:3} there exists an open neighborhood
  $U$ of $x$ such that $\restrict{\sheaf{F}}{U}\iso
  \restrict{\struct{X}^{n}}{U}$.
\end{proof}

\begin{remark}\label{remark:4}
  One can prove Lemma \ref{lemma:2} using Fitting ideals. Indeed, let $M$ be a
  finitely generated $A$-module.  For $r \in \N$ we denote the $r$-th Fitting
  ideal of $M$ by $\mathfrak{F}_r(M)$. Let $\ringed{X}$ be a ringed space, and
  consider an $\struct{X}$-module $\sheaf{G}$ of finite presentation.  We
  denote the $r$-th Fitting ideal sheaf of $\sheaf{G}$ by
  $\sheaf{F}_r(\sheaf{G})$, for $r \geq 0$.  For every $x \in X$,
  $\stalk{(\sheaf{F}_r(\sheaf{G}))}{x}=\mathfrak{F}_r(\stalk{\sheaf{G}}{x})$,
  and the ideal sheaf $\sheaf{F}_r(\sheaf{G})$ is of finite type as an
  $\struct{X}$-module.  Since $\sheaf{F}_r(\sheaf{G})$ is of finite type, if
  $\stalk{(\sheaf{F}_r(\sheaf{G}))}{x} = \sstalk{X}{x}$, then there exists an
  open neighborhood $U$ of $x$ such that $\stalk{(\sheaf{F}_r(\sheaf{G}))}{y}
  = \sstalk{X}{y}$, for all $y$ in $U$.  Also, the support of
  $\sheaf{F}_r(\sheaf{G})$, that is the set $\set{x\in X \suchthat
    \stalk{(\sheaf{F}_r(\sheaf{G}))}{x}\neq 0}$ is closed in $X$.  Further let
  $x\in X$, and an $\sstalk{X}{x}$-module $\stalk{\sheaf{G}}{x}$ is free of
  rank $n$.  Then there exists an open neighborhood $U$ of $x$ such that for
  all $y \in U$, $\stalk{\sheaf{G}}{y}$ is a free $\sstalk{X}{y}$-module of
  rank $n$.  (This follows from the fact that, if $M$ is a free $A$-module of
  rank $q$, then $\mathfrak{F}_r(M)=0$ for $0\leq r < q$, and
  $\mathfrak{F}_r(M)=A$ for $r\geq q$ \cite[Exercise 1, p.~90]{Nor76}, and
  converse is true if $A$ is a local ring \cite[Propostion 20.8,
  p.~500]{Eis95}.)  Now since $\sheaf{G}$ is of finite type
  $\restrict{\sheaf{G}}{U}\iso \restrict{\struct{X}^{n}}{U}$.
\end{remark}

\begin{lemma} \label{lemma:3} Let $\ringed{X}$ be a locally ringed space, and
  let $A$ denote the ring $\stion{\struct{X}}{X}$.  Suppose $\struct{X}$ is
  isomorphic to $\sheaf{S}(A)$.  Then for every finitely generated projective
  $A$-module $P$, the sheaf $\sheaf{S}(P)$ is a locally free
  $\struct{X}$-module of bounded rank.
\end{lemma}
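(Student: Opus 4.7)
The plan is to reduce finitely generated projective modules to direct summands of free modules, push this through $\sheaf{S}$ using its adjointness properties, and then apply Lemma \ref{lemma:2} stalk by stalk, using the local ringed space hypothesis to upgrade projective stalks to free stalks.

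First, I would use that a finitely generated projective $A$-module $P$ is a direct summand of some free module $A^n$, say $P\dsum Q \iso A^n$. Since $\sheaf{S}$ is a left adjoint of $\stion{\dummy}{X}$, it preserves finite direct sums, so $\sheaf{S}(P)\dsum\sheaf{S}(Q)\iso \sheaf{S}(A^n)\iso \sheaf{S}(A)^n$, which by hypothesis is isomorphic to $\struct{X}^n$. In particular $\sheaf{S}(P)$ is a direct summand of $\struct{X}^n$, which will give the desired uniform bound on the rank.

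Next, I would show that $\sheaf{S}(P)$ is of finite presentation. Since $P$ is finitely generated projective, it is finitely presented, so there is an exact sequence $A^p\rightarrow A^q\rightarrow P\rightarrow 0$. Applying the right exact functor $\sheaf{S}$, and again using $\sheaf{S}(A^m)\iso\struct{X}^m$, one obtains an exact sequence $\struct{X}^p\rightarrow \struct{X}^q\rightarrow \sheaf{S}(P)\rightarrow 0$, so $\sheaf{S}(P)$ is of finite presentation as an $\struct{X}$-module.

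Then I would examine the stalks. For every $x\in X$, we have $\sheaf{S}(P)_x\iso \Tensor[A]{P}{\sstalk{X}{x}}$, which is a finitely generated module, and a direct summand of $\nsstalk[n]{X}{x}$ by the first step, hence a finitely generated projective $\sstalk{X}{x}$-module. Because $\ringed{X}$ is locally ringed, $\sstalk{X}{x}$ is local, and every finitely generated projective module over a local ring is free. Thus $\sheaf{S}(P)_x$ is a free $\sstalk{X}{x}$-module.

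Having established that $\sheaf{S}(P)$ is of finite presentation with free stalks, Lemma \ref{lemma:2} produces, around each $x\in X$, an open neighborhood $U$ and an integer $n(x)$ with $\restrict{\sheaf{S}(P)}{U}\iso \restrict{\struct{X}^{n(x)}}{U}$; hence $\sheaf{S}(P)$ is locally free, and by the first step $n(x)\leq n$ for every $x$, so the rank is bounded. The only step demanding any real thought is the stalk analysis, where one must combine the summand property, the finite generation of $P$, and the locality of $\sstalk{X}{x}$ in order to pass from projective to free; everything else is routine use of the adjunction and of earlier lemmas.
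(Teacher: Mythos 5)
Your proposal is correct and follows essentially the same route as the paper: establish that $\sheaf{S}(P)$ is of finite presentation from a presentation $A^p\rightarrow A^q\rightarrow P\rightarrow 0$ using right exactness of $\sheaf{S}$ and $\sheaf{S}(A^m)\iso\struct{X}^m$, observe that the stalks $\Tensor[A]{P}{\sstalk{X}{x}}$ are finite free because $\sstalk{X}{x}$ is local, and invoke Lemma \ref{lemma:2}. Your use of the splitting $P\dsum Q\iso A^n$ to get freeness of the stalks and the rank bound is only a cosmetic variant of the paper's direct appeal to base change of projectives and the bound by $q$.
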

\begin{proof} \label{proof:6} It is given that, an $A$-module $P$ is finitely
  generated and projective, hence it is of finite presentation.  Therefore, we
  get an exact sequence of $A$-modules $A^p \rightarrow A^q \rightarrow P
  \rightarrow 0$, for some $p, q \in \N$.  Since the functor $\sheaf{S}$ is
  right exact, and by hypothesis $\sheaf{S}(A^m) \iso \struct{X}^m$ for all $m
  \in \N$, we get an exact sequence of $\struct{X}$-modules $\struct{X}^p
  \rightarrow \struct{X}^q \rightarrow \sheaf{S}(P) \rightarrow 0$.  This
  shows that $\sheaf{S}(P)$ is of finite presentation.  Since for every $x \in
  X$, $\sstalk{X}{x}$ is a local ring, and $P$ is a finitely generated
  projective module $\Tensor[A]{P}{\sstalk{X}{x}}$ is a free
  $\sstalk{X}{x}$-module of finite rank.  We denote the rank
  $\rk[x]{\sheaf{S}(P)}$ of the sheaf $\sheaf{S}(P)$ at every point $x$ in $X$
  by $n_x$.  Therefore $\Tensor[A]{P}{\sstalk{X}{x}}$ is isomorphic to
  $\nsstalk[n_x]{X}{x}$.  Now by Lemma \ref{lemma:2} $\sheaf{S}(P)$ is a
  locally free $\struct{X}$-module.  Also the family of integers $(n_x)_{x\in
    X}$ is bounded above by $q$, so the sheaf $\sheaf{S}(P)$ is of bounded
  rank.
\end{proof}

Now we are ready for giving proof of the main theorem.

\begin{proof}[{\bf Proof of Theorem \ref{SS Theorem}}]\label{proof:7} 
  It follows from Proposition \ref{prop:3} that, if an $\struct{X}$-module
  $\sheaf{F}$ is locally free of bounded rank then $\stion{\sheaf{F}}{X}$ is
  finitely generated projective $A$-module, hence the restriction of the
  functor $\stion{\dummy}{X}$ from the subcategory $\Lfb{X}$ to the
  subcategory $\Fgp{A}$ is welldefined.  Since $\Lfb{X}$ is a subcategory of
  the admissible subcategory $\cat{C}$, Proposition \ref{prop:1} implies that,
  $\stion{\dummy}{X}$ is fully faithful.  By Corollary \ref{cor:1}, if $P$ is
  finitely generated projective module, then $P$ is isomorphic to
  $\stion{\sheaf{S}(P)}{X}$.  The sheaf $\sheaf{S}(P)$ is locally free of
  bounded rank is follows from Proposition \ref{prop:2} and Lemma
  \ref{lemma:3}.  Hence the functor
  $\mor{\stion{\dummy}{X}}{\Lfb{X}}{\Fgp{A}}$ is essentially surjective.
  Therefore, $\stion{\dummy}{X}$ is an equivalence of categories.  Moreover,
  $\sheaf{S}$ is a quasi-inverse of $\stion{\dummy}{X}$ is a consequence of
  Proposition \ref{prop:2} and Lemma \ref{lemma:3}.
\end{proof}

\section{Some Special Cases}
\label{sec:special cases}

In this section we will discuss some important examples of locally ringed
spaces for which the Serre-Swan Theorem holds.

Let $\ringed{X}$ be an affine scheme, with a coordinate ring $A$.  Let
$\mathcal{B}$ denote the base for the topology on $X$, which consist of the
principal open sets $\X{f}$ $(f\in A)$.  For every $A$-module $M$ let
$\tilde{M}$ denote the $\struct{X}$-module associated to $M$ \cite[Chap. I,
D\'efinition (1.3.4), p.~85]{Gro60}.  (Recall that $\tilde{M}$ is defined by
$\stion{\tilde{M}}{\X{f}}=\loc{M}{f}$.)  Let $\Qcoh{X}$ denote the full
subcategory of $\struct{X}$-module consisting of quasicoherent
$\struct{X}$-modules.  The functor $\tilde{\dummy}$ is an equivalence of
categories from $\modcat{A}$ to $\Qcoh{X}$, with quasi-inverse
$\mor{\stion{\dummy}{X}}{\Qcoh{X}}{\modcat{A}}$ \cite[Chap. I, Th\'eor\`eme
(1.4.1), p.~90]{Gro60}.  The category $\Qcoh{X}$ clearly satisfies condition
{\bf C1} of Definition \ref{def:1}.  It is easy to check that the functor
$\sheaf{S}$ and the functor $\tilde{\dummy}$ are isomorphic.
\begin{corollary} \label{Serre's Theorem}{\bf \sc (Serre's Theorem)}{\rm
    \cite[Section 50, Corollaire to Proposition 4, p.~242]{Ser55}} Let
  $\ringed{X}$ be an affine scheme, and let $A$ denote its coordinate ring
  $\stion{\struct{X}}{X}$.  Then, a sheaf $\sheaf{F}$ is locally free
  $\struct{X}$-module of finite rank if and only if $\stion{\sheaf{F}}{X}$ is
  a finitely generated projective $A$-module.  The functor
  $\mor{\stion{\dummy}{X}}{\Lfb{X}}{\Fgp{A}}$ is an equivalence of categories,
  with a quasi-inverse $\mor{\tilde{\dummy}}{\Fgp{A}}{\Lfb{X}}$.
\end{corollary}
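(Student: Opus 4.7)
The plan is to apply Theorem~\ref{SS Theorem}: I take the admissible subcategory to be $\cat{C}=\Qcoh{X}$, the category of quasicoherent $\struct{X}$-modules, and then check that every locally free sheaf of bounded rank on an affine scheme is finitely generated by global sections. All remaining ingredients for Theorem~\ref{SS Theorem} will then be in place.

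To verify that $\Qcoh{X}$ is admissible: condition \textbf{C1} was recorded in the paragraph preceding the statement of the corollary (the equivalence $\tilde{\dummy}:\modcat{A}\to\Qcoh{X}$ transports the abelian structure, and the sheaf $\mathcal{H}om$ of two quasicoherent sheaves remains quasicoherent in this setting). For \textbf{C2}, acyclicity of every $\tilde{M}$ is Serre's classical cohomological vanishing theorem for affine schemes; generation by global sections is immediate because $\stion{\tilde{M}}{X}\iso M$ and the images of the elements of $M$ generate each stalk $\loc{M}{\ideal{p}}$ as an $\loc{A}{\ideal{p}}$-module. For \textbf{C3}, a locally free sheaf of bounded rank is, locally, isomorphic to $\struct{X}^n$, and this local description exhibits it as quasicoherent.

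For the finite-generation hypothesis, I would use that $X=\spec{A}$ is quasi-compact. Given $\sheaf{F}\in\Lfb{X}$, cover $X$ by finitely many principal open sets $\X{f_1},\ldots,\X{f_r}$ on which $\sheaf{F}$ becomes free of finite rank. Setting $M=\stion{\sheaf{F}}{X}$, one has $\sheaf{F}\iso\tilde{M}$ since $\sheaf{F}$ is quasicoherent, so each $\loc{M}{f_i}$ is free of finite rank over $\loc{A}{f_i}$. Choose finitely many generators of each $\loc{M}{f_i}$, clear denominators to obtain elements of $M$, and let $S\subset M$ be the finite set so produced. Since the $\X{f_i}$ cover $X$, suitable powers of $f_1,\ldots,f_r$ generate the unit ideal in $A$, and a standard patching argument then shows that $S$ generates $M$ as an $A$-module. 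Hence $\sheaf{F}$ is finitely generated by global sections.

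With the two hypotheses of Theorem~\ref{SS Theorem} established, the functor $\mor{\stion{\dummy}{X}}{\Lfb{X}}{\Fgp{A}}$ is an equivalence of categories, which yields the biconditional characterization together with Proposition~\ref{prop:2}. Since the functors $\sheaf{S}$ and $\tilde{\dummy}$ are naturally isomorphic on $\modcat{A}$ (as noted in the paragraph preceding the corollary), and $\sheaf{S}$ is a quasi-inverse of $\stion{\dummy}{X}$ by the theorem, $\tilde{\dummy}$ is itself a quasi-inverse. No step presents a genuine obstacle; the only piece requiring explicit work rather than citation or transport along the equivalence is the quasi-compactness-based finite-generation argument, and even that is routine.
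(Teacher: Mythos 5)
Your proposal is correct and follows essentially the same route as the paper: take $\cat{C}=\Qcoh{X}$ as the admissible subcategory, verify \textbf{C1}--\textbf{C3} via acyclicity of quasicoherent sheaves and $\sheaf{F}\iso\tilde{M}$, use quasi-compactness for finite generation by global sections, and invoke Theorem~\ref{SS Theorem} together with $\sheaf{S}\iso\tilde{\dummy}$. The only difference is that you spell out the denominator-clearing/patching argument that the paper merely asserts as standard.
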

\begin{proof} \label{proof:8} Quasicoherent $\struct{X}$-modules over an
  affine scheme are acyclic \cite[Theorem 2.18, p.~186]{Liu2002}.  Let sheaf
  $\sheaf{F}$ be an quasicoherent sheaf, and $M=\stion{\sheaf{F}}{X}$.  Then
  $\sheaf{F} \iso \tilde{M}$, and $\tilde{M}$ is clearly generated by global
  sections.  Obvious $\Lfb{X}$ is a subcategory of $\Qcoh{X}$, hence
  $\Qcoh{X}$ is an admissible subcategory.  Since $X$ is quasicompact, locally
  free sheaves of finite rank are finitely generated by global sections.
  Therefore the corollary will follows from Theorem \ref{SS Theorem}.
\end{proof}

Consider a ringed space $\ringed{X}$ as in Lemma \ref{lemma:1}.
Further, assume that $X$ is of bounded topological dimension.  Then,
locally free sheaves of bounded rank over $X$ are finitely generated
by global sections.  This fact is standard when $\ringed{X}$ is a
differential manifold \cite[Chap. III, Proposition 4.1]{Wel80}, and the
proof in the general case is similar.

\begin{corollary} \label{cor:2} 
  Let $\ringed{X}$ be a ringed space such that, $X$ is a paracompact
  topological space of bounded topological dimension, and $\struct{X}$ is a
  fine sheaf.  Then, the Serre-Swan Theorem holds for $\ringed{X}$.
\end{corollary}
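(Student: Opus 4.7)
The plan is to reduce the corollary to Theorem \ref{SS Theorem} by exhibiting an explicit admissible subcategory $\cat{C}$ of $\modcat{\struct{X}}$. The most economical choice is the largest one: take $\cat{C} = \modcat{\struct{X}}$ itself. Under this choice, condition \textbf{C1} is automatic (the whole module category is abelian and trivially closed under $\Shom[\struct{X}]{\dummy}{\dummy}$), and condition \textbf{C3} holds by inspection. The nontrivial part is condition \textbf{C2}: every $\struct{X}$-module must be both acyclic and generated by global sections.

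For the generation-by-global-sections half of \textbf{C2}, I would cite Lemma \ref{lemma:1} verbatim, which applies to any $\struct{X}$-module under exactly the present hypotheses on $X$ and $\struct{X}$. For acyclicity, the plan is to use the standard observation that if $\struct{X}$ is a fine sheaf of rings on a paracompact space, then every $\struct{X}$-module $\sheaf{F}$ is itself a fine sheaf: given a locally finite open cover $\{U_i\}$ of $X$, a subordinate partition of unity $(\eta_i)$ of endomorphisms of $\struct{X}$ acts on $\sheaf{F}$ by $\struct{X}$-multiplication and yields a partition of unity of endomorphisms of $\sheaf{F}$ subordinate to $\{U_i\}$. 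Fine sheaves on paracompact spaces are soft, and soft sheaves on paracompact spaces are acyclic, which is what \textbf{C2} requires.

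Once $\cat{C} = \modcat{\struct{X}}$ is confirmed admissible, the second hypothesis of Theorem \ref{SS Theorem} -- that every sheaf in $\Lfb{X}$ is finitely generated by global sections -- is precisely the statement recorded in the paragraph immediately preceding the corollary, which is valid under the assumption that $X$ is of bounded topological dimension. Both hypotheses of Theorem \ref{SS Theorem} are therefore in place, and the Serre-Swan Theorem holds for $\ringed{X}$.

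The main obstacle is not conceptual but bibliographic: the chain of implications \emph{fine sheaf of rings $\Rightarrow$ module is fine $\Rightarrow$ module is soft $\Rightarrow$ module is acyclic} is standard in sheaf-cohomology texts (for instance, Wells or Bredon), but one should cite it carefully so that the proof of \textbf{C2} is self-contained; everything else is bookkeeping against the results already proved in Section \ref{sec:serre-swan-theorem}.
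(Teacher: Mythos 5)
Your proposal is correct and follows exactly the paper's own argument: take $\cat{C}=\modcat{\struct{X}}$, verify \textbf{C2} via the chain fine $\Rightarrow$ soft $\Rightarrow$ acyclic together with Lemma \ref{lemma:1}, and invoke the preceding paragraph for finite generation by global sections before applying Theorem \ref{SS Theorem}. No discrepancies to report.
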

\begin{proof} \label{proof:9} The category $\modcat{\struct{X}}$ clearly
  satisfies {\bf C1} and {\bf C3} of Definition \ref{def:1}.  Since $X$ is a
  paracompact topological space, fine sheaves on $X$ are soft, and hence
  acyclic.  Since $\struct{X}$ is a fine sheaf, every $\struct{X}$-module is
  fine, and hence acyclic.  Also by Lemma \ref{lemma:1} $\sheaf{F}$ is
  generated by global sections, hence $\modcat{\struct{X}}$ satisfies {\bf
    C2}.  From the previous paragraph every sheaf $\sheaf{F}$ in $\Lfb{X}$ is
  finitely generated by global sections.  Now the corollary follows applying
  Theorem \ref{SS Theorem}, with $\cat{C}=\modcat{\struct{X}}$.
\end{proof}

The sheaf of continuous real-valued functions on a paracompact topological
space is a fine sheaf.  Hence, the following is an immediate consequence of
Corollary \ref{cor:2}.

\begin{corollary} \label{Swan's Theorem}{\bf \sc (Swan's Theorem)}{\rm
    \cite[Theorem 2 and p.~277]{Sw62}} Let $X$ be a paracompact topological
  space of bounded topological dimension, and let $\contstruct{X}$ denote the
  sheaf of continuous real-valued functions on $X$.  Let $\contralg{X}$ denote
  the $\R$-algebra $\stion{\contstruct{X}}{X}$.  Then, the functor
  $\mor{\stion{\dummy}{X}}{\Lfb{X}}{\Fgp{\contralg{X}}}$ is an equivalence of
  categories.
\end{corollary}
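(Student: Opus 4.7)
The plan is to deduce this corollary directly from Corollary \ref{cor:2} by specializing to $\struct{X}=\contstruct{X}$. The hypotheses on $X$ (paracompact, of bounded topological dimension) are built into the statement, so the only thing I need to verify is the remaining hypothesis of Corollary \ref{cor:2}, namely that $\contstruct{X}$ is a fine sheaf. Once this is established, no further work is required: Corollary \ref{cor:2} immediately gives the equivalence $\stion{\dummy}{X}:\Lfb{X}\rightarrow \Fgp{\contralg{X}}$.

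To show that $\contstruct{X}$ is fine, I would recall the standard characterization: a sheaf of rings $\sheaf{A}$ on $X$ is fine if, for every locally finite open cover $\Opcover{U}{i}{I}$ of $X$, there exist global sections $(\eta_i)_{i\in I}$ of $\sheaf{A}$ with $\supp(\eta_i)\subseteq U_i$ and $\sum_{i\in I}\eta_i=1$. For $\sheaf{A}=\contstruct{X}$ this is exactly the existence of a continuous partition of unity subordinate to the given cover, and this is a classical theorem for paracompact Hausdorff spaces (which, by our blanket convention in Section~\ref{sec:serre-swan-theorem}, is what ``paracompact'' means here). The standard route is: paracompact Hausdorff implies normal; apply a shrinking lemma to the locally finite cover; then use Urysohn's lemma on each shrunk pair together with a normalization step to produce the $\eta_i$.

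With $\contstruct{X}$ known to be fine, I would then simply invoke Corollary \ref{cor:2}, whose conclusion is precisely the statement of Swan's Theorem. I do not anticipate any genuine obstacle: the heavy lifting (constructing the admissible subcategory $\modcat{\contstruct{X}}$, verifying acyclicity of fine sheaves on paracompact spaces, global generation via Lemma \ref{lemma:1}, and finite generation of locally free sheaves of bounded rank on a space of bounded topological dimension) has already been done in the course of proving Corollary \ref{cor:2}. The only content of the present corollary is the observation that the sheaf of continuous real-valued functions provides a concrete instance of that framework.
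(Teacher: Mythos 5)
Your proposal is correct and matches the paper's own argument exactly: the paper proves this corollary by noting that the sheaf of continuous real-valued functions on a paracompact space is fine (via subordinate partitions of unity) and then citing Corollary \ref{cor:2}. You have merely spelled out the standard partition-of-unity construction in more detail.
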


It follows from Corollary \ref{cor:2} that if $X$ is a differentiable manifold
of bounded dimension, and $\cinfty[\R]$ (respectively $\cinfty[\C]$) is the
sheaf of differentiable real-valued (respectively, complex-valued) functions
on $X$, then the category of real (respectively, complex) differentiable
vector bundles on a manifold $X$ is equivalent to the category of finitely
generated projective $\stion{\cinfty[\R]}{X}$-modules (respectively
$\stion{\cinfty[\C]}{X}$-modules).  Another interesting example is that of
affine differentiable spaces.
\begin{corollary} \label{cor:3}\cite[Theorem 3.11; Theorem
    4.16]{GS2003} Let $A$ be a differentiable algebra, and let $\ringed{X}$
  denote its real spectrum $\adiffspace{A}$ {\rm(\cite[pp.~22, 30, 44]{GS2003})}.
  Then, the functor,
  $\mor{\stion{\dummy}{X}}{\modcat{\struct{X}}}{{\modcat{A}}_{\modcat{\struct{X}}}}$
  is an equivalence of categories.  Moreover, the Serre-Swan Theorem holds for
  $X$.
\end{corollary}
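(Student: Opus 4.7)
The plan is to apply Theorem \ref{SS Theorem} and Remark \ref{remark:1} with the choice $\cat{C}=\modcat{\struct{X}}$, so the main work is to verify that the whole module category is admissible and that locally free sheaves of bounded rank on $X=\rspec{A}$ are finitely generated by global sections.

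First I would recall from \cite{GS2003} the structural facts about the real spectrum of a differentiable algebra: $\rspec{A}$ is a compact Hausdorff space and the structure sheaf $\tilde{A}$ is a fine sheaf of rings. Once this is in place, Lemma \ref{lemma:1} applies: for any $\struct{X}$-module $\sheaf{F}$, the stalk map $\stion{\sheaf{F}}{X}\to\stalk{\sheaf{F}}{x}$ is surjective, so $\sheaf{F}$ is generated by global sections. Moreover, because $\struct{X}$ is fine every $\struct{X}$-module is itself fine, and on a paracompact (here even compact) space fine sheaves are soft, hence acyclic. This verifies condition \textbf{C2} of Definition \ref{def:1} for the entire category $\modcat{\struct{X}}$; \textbf{C1} holds trivially since the category is closed under $\Shom[\struct{X}]{-}{-}$, and \textbf{C3} is immediate. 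So $\cat{C}=\modcat{\struct{X}}$ is admissible.

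Next I would handle the finite-generation hypothesis of Theorem \ref{SS Theorem}. Let $\sheaf{F}\in\Lfb{X}$ with rank bounded by some $N\in\N$. Because $X$ is compact, one chooses a finite open cover $\{U_1,\dots,U_r\}$ trivializing $\sheaf{F}$ on each $U_i$, picks local frames $(s^{(i)}_1,\dots,s^{(i)}_{n_i})\in\sheaf{F}(U_i)^{n_i}$, and, using the fine sheaf structure (or, equivalently, a differentiable partition of unity subordinate to the cover as in \cite{GS2003}), extends truncated versions of these sections to global sections of $\sheaf{F}$ whose stalks span $\stalk{\sheaf{F}}{x}$ at every point. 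This yields a surjection $\struct{X}^{n}\twoheadrightarrow\sheaf{F}$ with $n=n_1+\cdots+n_r$, so $\sheaf{F}$ is finitely generated by global sections.

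With both hypotheses checked, Theorem \ref{SS Theorem} gives the Serre-Swan conclusion: $\stion{\dummy}{X}\colon\Lfb{X}\to\Fgp{A}$ is an equivalence. For the first assertion of the corollary, namely that $\stion{\dummy}{X}\colon\modcat{\struct{X}}\to{\modcat{A}}_{\modcat{\struct{X}}}$ is an equivalence, I would invoke Remark \ref{remark:1}: since $\modcat{\struct{X}}$ satisfies the hypotheses of Proposition \ref{prop:1} (it is abelian, contains $\struct{X}$, and every object is generated by global sections by the previous paragraph), the functor $\stion{\dummy}{X}$ restricted to $\cat{C}=\modcat{\struct{X}}$ is fully faithful, and it is essentially surjective onto ${\modcat{A}}_{\modcat{\struct{X}}}$ by the very definition of that subcategory. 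The main obstacle, and the one point that leans on the theory of differentiable algebras rather than on the general machinery developed in Section~2, is the fine/soft character of $\tilde{A}$ and the compactness of $\rspec{A}$; once those are granted from \cite{GS2003}, everything else is a direct application of the results already proved.
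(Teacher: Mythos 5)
Your overall strategy matches the paper's: verify that $\cat{C}=\modcat{\struct{X}}$ is admissible using the fineness of $\struct{X}$, check finite global generation of sheaves in $\Lfb{X}$, and then apply Theorem \ref{SS Theorem} together with Remark \ref{remark:1}. (The paper packages the first two steps by simply citing Corollary \ref{cor:2}, whose hypotheses are paracompactness, bounded topological dimension, and fineness of $\struct{X}$.) However, there is a genuine error in your input from \cite{GS2003}: the real spectrum of a differentiable algebra is \emph{not} compact in general. By \cite[Proposition 2.13]{GS2003} it is homeomorphic to a \emph{closed} subset of $\R^{n}$ for some finite $n$; taking $A=\cinfty{\R^{n}}$ already gives $X\cong\R^{n}$. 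The correct conclusions to draw are paracompactness and bounded topological dimension, which is exactly what the paper does.

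This matters because your argument for finite global generation leans on compactness: you extract a \emph{finite} trivializing cover and extend local frames over it. For noncompact $X$ no finite trivializing cover need exist, so that step fails. The argument that actually works (and that the paper invokes, via the remark preceding Corollary \ref{cor:2} and the reference to \cite[Chap.~III, Proposition 4.1]{Wel80}) uses bounded topological dimension: refine a trivializing cover to a locally finite one of multiplicity at most $\dim X+1$, partition it into finitely many classes of pairwise disjoint open sets, and use the partition of unity to assemble one global section per class and per frame index, yielding a surjection from $\struct{X}^{N}$ with $N$ controlled by the rank bound and the dimension. Your verification of \textbf{C2} survives unharmed, since paracompactness (rather than compactness) is all that is needed for fine sheaves to be soft and hence acyclic, and Lemma \ref{lemma:1} likewise only needs paracompactness and fineness. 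So the proof is repaired by replacing ``compact'' with ``paracompact of bounded topological dimension'' and substituting the dimension-based finite-generation argument for the finite-cover one.
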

\begin{proof}\label{proof:10}
  The topological space $X$ is homeomorphic to a closed subset of $\R^n$ for
  some finite $n$ \cite[Proposition 2.13]{GS2003}, therefore
  $X$ is a paracompact topological space of bounded topological dimension.
  Since the sheaf $\struct{X}$ admits partition of unity \cite[Theorem of
  partition of unity, p.~52]{GS2003} it a fine sheaf.  Now the corollary
  follows from Remark \ref{remark:1} and Corollary \ref{cor:2}.
\end{proof}

It is easy to see that the functor $\sheaf{S}$ is isomorphic to the functor
$\tilde{\dummy}$ defined as in \cite[Definition, p.~40]{GS2003}.

In the paper \cite{Mul76}, Mulvey proved that for a locally ringed space
\ringed{X} (not necessarily commutative) whose center is compact, the
Serre-Swan theorem holds.  A ringed space $\ringed{X}$ is said to be compact
provided that, the topological space $X$ is compact, and that for every $x, x'
\in X$, there exists an element $a\in \stion{\struct{X}}{X}$ satisfying
$a(x)=1$ and $a(x')=0$, \cite[Section 3, definition, p.~63]{Mul76}.  If
$\ringed{X}$ is commutative (i.e., $\stion{\struct{X}}{X}$ is a commutative
ring) then the above result follows from Theorem \ref{SS Theorem}.

\begin{corollary}\label{cor:4}{\rm \cite[Theorem 4.1]{Mul76}} 
  If a locally ringed space $\ringed{X}$ is compact, then the category of
  locally free $\struct{X}$-modules of bounded rank is equivalent to the
  category of finitely generated projective $\stion{\struct{X}}{X}$-modules.
\end{corollary}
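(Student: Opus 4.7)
The plan is to derive Corollary \ref{cor:4} from Theorem \ref{SS Theorem} by exhibiting an admissible subcategory $\cat{C}$ of $\modcat{\struct{X}}$ and showing that every sheaf in $\Lfb{X}$ is finitely generated by global sections. Since $X$ is compact and $\struct{X}$ has the Mulvey separation property, both facts should follow from a partition-of-unity construction for $\struct{X}$.

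The first step is to strengthen the pointwise separation hypothesis to a point-vs.-closed-set statement: for every open $U \subset X$ and every $x \in U$, there exists $a \in A = \stion{\struct{X}}{X}$ with $a(x) = 1$ in the residue field and with $a$ vanishing (as a germ) on an open neighborhood of $X \setminus U$. I would attempt this by, for each $y \in X \setminus U$, choosing a pointwise separator $a_{x,y}$ with $a_{x,y}(x) = 1$ and $a_{x,y}(y) = 0$, then exploiting compactness of $X \setminus U$ to extract a finite subfamily and combining them via products and sums in the commutative ring $A$ to localize the vanishing to a full open neighborhood. Once this bump section is available, a standard rescaling argument yields, for any finite open cover $\{U_1, \ldots, U_k\}$ of $X$, a subordinate partition of unity $1 = f_1 + \cdots + f_k$ in $A$ with $f_i$ vanishing on a neighborhood of $X \setminus U_i$.

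With partitions of unity at hand, I would take $\cat{C} = \modcat{\struct{X}}$ and verify C1--C3: C1 and C3 are immediate, while for C2 the partition of unity makes $\struct{X}$ fine, so every $\struct{X}$-module is soft on the compact Hausdorff (hence paracompact) space $X$ and therefore acyclic, and the bump-section construction extends local sections to global ones as in Lemma \ref{lemma:1}, proving generation by global sections. For finite generation of $\sheaf{F} \in \Lfb{X}$ by global sections, I would cover $X$ by finitely many trivializing opens (using compactness together with boundedness of rank), pick a local frame on each, and extend each frame section to a global section via a bump section; the resulting finite family generates every stalk, and Theorem \ref{SS Theorem} then yields the conclusion.

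The principal obstacle is the construction of genuine bump sections from the weak pointwise separation hypothesis, since the latter only guarantees separation at the level of residue fields while fineness of $\struct{X}$ requires sections whose germs vanish on open sets. The crux is to exploit the commutative ring structure on $A$ together with compactness of $X \setminus U$ to pass from pointwise residue-field vanishing at finitely many points to germ-level vanishing on an open neighborhood of the whole closed set. Once this key technical lemma is in place, the remaining verifications are routine adaptations of the proofs of Corollary \ref{cor:2} and Corollary \ref{cor:3}.
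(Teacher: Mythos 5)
Your overall reduction is the same as the paper's: take $\cat{C}=\modcat{\struct{X}}$, note that C1 and C3 are automatic, get C2 from fineness of $\struct{X}$ (fine implies soft implies acyclic on the compact, hence paracompact, space $X$, and Lemma \ref{lemma:1} gives generation by global sections), obtain finite generation of locally free sheaves of bounded rank from a finite trivializing cover plus bump sections, and invoke Theorem \ref{SS Theorem}. The paper does exactly this, except that it does not attempt to prove that $\struct{X}$ is fine: it cites \cite[Corollary 1.3]{Mul78} for that fact.

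The gap is in your proposed proof of fineness, and it is not a minor one. The hypothesis $a(x')=0$ only says that the germ of $a$ at $x'$ lies in the maximal ideal $\mathfrak{m}_{x'}\subset\sstalk{X}{x'}$; this is a pointwise, residue-field condition and is not open in $x'$. Consequently your compactness step does not get off the ground: to extract a finite subfamily from $\{a_{x,y}\}_{y\in X\setminus U}$ you must attach to each $y$ an open neighborhood on which some uniform statement holds, and the only open condition the separation hypothesis provides is invertibility of $1-a_{x,y}$ near $y$ (since $(1-a_{x,y})(y)=1$ and units propagate to neighborhoods in a locally ringed space), which says nothing about $a_{x,y}$ vanishing. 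Moreover, no combination of sums and products of finitely many sections, each of which merely lies in $\mathfrak{m}_{y_j}$ at a single point $y_j$, will produce a section whose germ is actually $0$ on an open neighborhood of $X\setminus U$: ``lies in the maximal ideal at a point'' and ``has zero germ near the point'' are very different conditions, and ring operations preserve only the former. Passing from the pointwise separation property to genuine bump sections and partitions of unity is precisely the content of Mulvey's theorem in \cite{Mul78}, whose proof is substantially more elaborate than the manipulation you sketch. You correctly identified this as the crux, but the mechanism you propose for it does not work; you must either reproduce Mulvey's argument or cite it, as the paper does. The remainder of your proof is correct and coincides with the paper's.
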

\begin{proof} \label{proof:11} The structure sheaf $\struct{X}$ is a fine
  sheaf (this is follows from \cite[Corollary 1.3]{Mul78}), and $X$ is a
  compact topological space.  Therefore, by Lemma \ref{lemma:1} every sheaf in
  $\modcat{\struct{X}}$ is generated by global sections.  Also, since $X$ is
  compact, locally free sheaves of bounded rank over $X$ are finitely
  generated by global sections.  On a paracompact space fine sheaves are
  acyclic, hence all $\struct{X}$-modules are acyclic.  Therefore the category
  $\modcat{\struct{X}}$ is admissible.  Now, the corollary follows from
  Theorem \ref{SS Theorem} by taking category $\cat{C}$ to be the category
  $\modcat{\struct{X}}$.
\end{proof}

Recall that \cite[p.~ 8, Definition 10.2]{Pie67}, a ringed space $\ringed{X}$
is called {\it regular ringed space} if $X$ is a profinite space, i.e., a
compact totally disconnected space, and $\sstalk{X}{x}$ is a field for
every $x\in X$.
\begin{corollary} \label{cor:5}{\rm \cite[Theorem 15.3]{Pie67}} Let
  $\ringed{X}$ be a regular ringed space, and let $A$ denote
  $\stion{\struct{X}}{X}$. Then Serre-Swan Theorem holds for $\ringed{X}$.
  Moreover, coherent sheaves over $X$ are locally free $\struct{X}$-modules of
  bounded rank.  Hence, $\Coh{X}$, the category of coherent sheaves over $X$, 
  and $\Fgp{A}$ are equivalent.
\end{corollary}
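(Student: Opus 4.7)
The plan is to derive the Serre--Swan part from Corollary \ref{cor:4} and then to identify $\Coh{X}$ with $\Lfb{X}$. To invoke Corollary \ref{cor:4} I need $\ringed{X}$ to be compact in Mulvey's sense: $X$ is compact by definition of a profinite space, so only the separation of points by global sections requires work. Given distinct $x, x' \in X$, total disconnectedness of $X$ yields a clopen $U \subseteq X$ with $x \in U$ and $x' \notin U$, and the sheaf axiom applied to the disjoint clopen decomposition $X = U \sqcup (X \setminus U)$ gives $A \iso \struct{X}(U) \times \struct{X}(X \setminus U)$. The idempotent $e \in A$ corresponding to $(1, 0)$ has germ $1$ at $x$ and germ $0$ at $x'$; since each stalk $\sstalk{X}{y}$ is a field its maximal ideal vanishes, so evaluation in the residue field coincides with the germ map and $e(x) = 1$, $e(x') = 0$. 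Corollary \ref{cor:4} then produces the Serre--Swan equivalence $\stion{\dummy}{X} \colon \Lfb{X} \to \Fgp{A}$.

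Next I would show that every $\sheaf{F} \in \Coh{X}$ lies in $\Lfb{X}$. Coherence implies that $\sheaf{F}$ is of finite presentation, and for each $x \in X$ the stalk $\stalk{\sheaf{F}}{x}$ is a finitely generated module over the field $\sstalk{X}{x}$, hence free of some rank $n_x$. Lemma \ref{lemma:2} then supplies an open neighborhood of $x$ on which $\sheaf{F}$ is isomorphic to $\struct{X}^{n_x}$, so $\sheaf{F}$ is locally free. The rank function $x \mapsto n_x$ is locally constant and $X$ is compact, so it takes only finitely many values; hence $\sheaf{F}$ has bounded rank.

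To conclude $\Coh{X} \iso \Fgp{A}$, the inclusion $\Coh{X} \hookrightarrow \Lfb{X}$ composed with Serre--Swan yields a fully faithful functor $\Coh{X} \to \Fgp{A}$. For essential surjectivity, given $P \in \Fgp{A}$ one has $P \iso \stion{\sheaf{S}(P)}{X}$ with $\sheaf{S}(P) \in \Lfb{X}$, so it remains to show that $\sheaf{S}(P)$ is coherent. Since $\sheaf{S}(P)$ is locally isomorphic to a power of $\struct{X}$, this reduces to coherence of $\struct{X}$, and this last reduction is the main obstacle. I would establish it by observing that in a regular ringed space the vanishing locus $\{a = 0\}$ of a section $a \in \struct{X}(U)$ is clopen in $U$: vanishing of the germ is always an open condition, and because every $\sstalk{X}{y}$ is a field, $\{a = 0\}$ is the complement of the open set of points at which $a$ is a unit in the stalk. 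For any morphism $\phi \colon \restrict{\struct{X}^n}{U} \to \restrict{\struct{X}}{U}$ defined by sections $(a_i)$, the base $U$ is then covered by the clopen subset on which all $a_i$ vanish, where $\Ker{\phi}$ equals the whole $\struct{X}^n$, and by opens on which some $a_i$ is a unit, where $\Ker{\phi}$ is locally free of rank $n-1$; in either case $\Ker{\phi}$ is of finite type, so $\struct{X}$ is coherent.
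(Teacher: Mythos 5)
Your proposal is correct and follows the same skeleton as the paper's proof (reduce the Serre--Swan part to Corollary \ref{cor:4}, then use Lemma \ref{lemma:2} plus compactness to put $\Coh{X}$ inside $\Lfb{X}$), but it is more self-contained in two places. First, where the paper simply cites Mulvey for the fact that regular ringed spaces are compact ringed spaces, you verify Mulvey's separation condition directly via the clopen decomposition $X = U \sqcup (X\setminus U)$ and the resulting idempotent; this is a correct and pleasant argument, and your observation that evaluation in the residue field coincides with the germ map because the stalks are fields is exactly the point that makes it work. Second, and more substantively, you address a step the paper's proof leaves implicit: the final claim that $\Coh{X}$ and $\Fgp{A}$ are equivalent needs essential surjectivity of $\stion{\dummy}{X}$ restricted to $\Coh{X}$, i.e., that $\sheaf{S}(P)$ is coherent (equivalently, that locally free sheaves of bounded rank are coherent), and the paper only proves the inclusion $\Coh{X}\subseteq\Lfb{X}$. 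Your reduction to the coherence of $\struct{X}$, proved by splitting the base into the clopen locus where all the defining sections vanish and the open locus where one of them is a unit in the stalk, closes this gap correctly: since each stalk is a field, every germ is zero or a unit, both conditions are open, and in either case the kernel is of finite type. So your route buys a complete, reference-free proof of the full statement, at the cost of a page of extra argument that the paper outsources to \cite{Mul76} and to the reader.
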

\begin{proof}\label{proof:12} 
  Note that regular ringed spaces are commutative and compact ringed spaces
  \cite[pp.~65-66]{Mul76}.  Therefore, the Serre-Swan Theorem holds for
  $\ringed{X}$ by Corollary \ref{cor:4}.  Let $\sheaf{F}$ be a coherent
  $\struct{X}$-module.  Since $X$ is a regular ringed space, $\sstalk{X}{x}$
  is a field for every $x\in X$.  Thus, $\stalk{\sheaf{F}}{x}$ is a free
  $\sstalk{X}{x}$-module for every $x\in X$.  Since coherent sheaves are of
  finitely presentation, Lemma \ref{lemma:2} implies that $\sheaf{F}$ is
  locally free.  Also $X$ is compact, therefore $\sheaf{F}$ is of bounded
  rank.  Hence, $\sheaf{F}$ is a locally free $\struct{X}$-module of bounded
  rank.
\end{proof}

Let $\ringed{X}$ be a Stein space, and let $A=\stion{\struct{X}}{X}$.  Recall
that, a topological module $M$ over the algebra $A$ is called a {\it Stein
  module} if there exists a coherent $\struct{X}$-module $\sheaf{M}$, such
that $\stion{\sheaf{M}}{X}$ is isomorphic to $M$ \cite[Section 2,
p.~383]{For67}.  Let $\modcat{\Stein{S}}$ denote the category of Stein modules
over $A$.  Since $\struct{X}$ is coherent, $\Coh{X}$ satisfy {\bf C1} and {\bf
  C3} of Definition \ref{def:1}.  Recall that, {\sc Theorem A} and {\sc
  Theorem B} are valid for Stein spaces \cite[Chapter IV, Section 1, Theorem
2, p.~101 and Chapter V, Section 4, Theorem 3, p.~152]{GR79}.  Therefore, the
category $\Coh{X}$ satisfies {\bf C2}.  Thus, $\Coh{X}$ is an admissible
subcategory of $\modcat{\struct{X}}$.  Note that the full subcategory
${\modcat{A}}_{\Coh{X}}$ of category $\modcat{A}$ is precisely
$\modcat{\Stein{S}}$.
\begin{corollary} \label{Forster's theorem}{\rm \cite[Satz 6.7 and Satz
    6.8]{For67}}
  Let $\ringed{X}$ be a finite-dimensional connected Stein space.  Then
  $\mor{\stion{\dummy}{X}}{\Coh{X}}{\modcat{\Stein{S}}}$ is an equivalence of
  category with quasi-inverse $\mor{\sheaf{S}}{\modcat{\Stein{S}}}{\Coh{X}}$.
  Moreover, the category of locally free sheaves of finite rank is equivalent
  to the category of finitely generated projective $A$-modules.
\end{corollary}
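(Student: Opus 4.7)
The plan is to apply Remark \ref{remark:1} and Theorem \ref{SS Theorem} to the admissible subcategory $\cat{C}=\Coh{X}$, which has already been verified to satisfy conditions \textbf{C1}, \textbf{C2}, and \textbf{C3} in the paragraph preceding the corollary (using coherence of $\struct{X}$ and Cartan's Theorems A and B).

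For the first assertion, I would invoke Remark \ref{remark:1} directly: since $\Coh{X}$ is a full abelian subcategory of $\modcat{\struct{X}}$ in which every sheaf is generated by global sections (Theorem A), Proposition \ref{prop:1} applies and yields that $\stion{\dummy}{X}$ is fully faithful on $\Coh{X}$. By the very definition of ${\modcat{A}}_{\Coh{X}}$, this functor becomes essentially surjective onto $\modcat{\Stein{S}}={\modcat{A}}_{\Coh{X}}$. To identify $\sheaf{S}$ as the quasi-inverse, I would use Proposition \ref{prop:2}, which provides the natural isomorphism $\sheaf{F}\iso\sheaf{S}(\stion{\sheaf{F}}{X})$ for every $\sheaf{F}\in\Coh{X}$, together with the adjunction counit/unit formalism from (\ref{eq:1}).

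For the second assertion, I would verify the two hypotheses of Theorem \ref{SS Theorem} for $\cat{C}=\Coh{X}$. Admissibility is already established. It remains to show that every locally free $\struct{X}$-module $\sheaf{F}$ of bounded rank is finitely generated by global sections. Since $X$ is connected, any locally free sheaf of finite rank has constant rank, so locally free sheaves of finite rank coincide with those of bounded rank, and in particular they are coherent. On a Stein space of finite dimension $n$, a theorem of Forster--Ramspott (or the classical rank estimate going back to Forster in the same paper \cite{For67}) asserts that a locally free coherent sheaf of rank $r$ can be generated by a finite number of global sections (any number $\geq r+n$ suffices). Once finite generation by global sections is in hand, Theorem \ref{SS Theorem} immediately gives the equivalence $\mor{\stion{\dummy}{X}}{\Lfb{X}}{\Fgp{A}}$.

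The main obstacle is the finite-generation step for locally free sheaves: Theorem A alone produces a possibly infinite family of generating global sections, and extracting a finite subfamily that still generates every stalk is precisely where the hypothesis that $X$ is finite-dimensional (and, for the rank count, connected) enters in a non-trivial way. Everything else is a direct bookkeeping application of the preceding machinery, noting in particular that the functor $\sheaf{S}$ of the present paper coincides with Forster's construction so that the two quasi-inverses match.
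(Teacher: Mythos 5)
Your proposal is correct and follows the same overall skeleton as the paper's proof: admissibility of $\Coh{X}$ is taken from the preceding paragraph, the first assertion is Remark \ref{remark:1} together with Proposition \ref{prop:2}, and the entire remaining content of the second assertion is the finite generation of locally free sheaves of bounded rank by global sections, after which Theorem \ref{SS Theorem} applies. The one place where you genuinely diverge is in how that finite-generation step is discharged. The paper argues through Stein modules: for $\sheaf{M}$ locally free of bounded rank the ranks $d_x(M)$ are bounded, connectedness and finite dimensionality of $X$ make $A$ an indecomposable, finite-dimensional Stein algebra, so \cite[Corollary 4.7]{For67} gives that $M=\stion{\sheaf{M}}{X}$ is a finitely generated $A$-module; since Theorem A guarantees each stalk is generated by the image of $M$, finitely many $A$-module generators of $M$ then generate every stalk. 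You instead invoke a Forster--Ramspott-type bound asserting that a rank-$r$ vector bundle on an $n$-dimensional Stein space is generated by roughly $r+n$ global sections. Both routes are legitimate, cite results external to the present paper, and locate the use of finite dimensionality and connectedness in exactly the same place; the paper's version has the advantage of quoting a statement from the very paper \cite{For67} whose theorem is being re-derived, and of making explicit the elementary deduction that generation by global sections plus finite generation of the module of global sections yields finite generation by global sections, a step your sketch leaves implicit. Your observation that connectedness forces constant rank, so that finite rank and bounded rank coincide here, is a correct point the paper does not spell out.
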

\begin{proof} \label{proof:13} Let $A=\stion{\struct{X}}{X}$.  Since $\Coh{X}$
  is an admissible subcategory, by Theorem \ref{SS Theorem} and Remark
  \ref{remark:1} to prove the corollary it is enough to prove that, every
  locally free sheaf of bounded rank is finitely generated by global sections.
  Let $\sheaf{M}$ be locally free sheaf on $X$ of bounded rank, and
  $M=\stion{\sheaf{M}}{X}$ be the corresponding Stein module over $A$.  For a
  Stein module $M$ let $d_x(M)$ denote the rank of $\sheaf{M}$ at a point $x$.
  Let $d=\sup\set{d_x(M)\suchthat \forall x \in X}$.  Since $\sheaf{M}$ is of
  finite rank $d<\infty$.  Since $X$ is connected and finite dimensional, $A$
  is indecomposable and finite dimensional \cite[Section 1, Subsection 3,
  p.~382]{For67}.  Therefore, $M$ is a finitely generated $A$-module
  \cite[Corollary 4.7]{For67}.  Now since $\sheaf{M}$ is generated by global
  sections, and $\stion{\sheaf{M}}{X}$ is a finitely generated, $\sheaf{M}$ is
  finitely generated by global sections.
\end{proof}

Note that the spectrum of $A$, denoted by $S(A)$ is homeomorphic to $X$
\cite[Section 1, Satz 1, Beweis c, p.~380]{For67}.  Therefore the definition
of $d$ in \cite[Corollary 4.7]{For67} is same as that in the Corollary
\ref{Forster's theorem}.

\vspace{.3cm}

{\small {\it Acknowledgment.}  The author would like to thank the
  anonymous referee for helpful comments and suggestions.}

\end{document}